\documentclass[oneside,english,11pt]{amsart}
\usepackage[T1]{fontenc}
\usepackage[utf8]{inputenc}
\usepackage{amsmath,amssymb,amsfonts,amsthm,amstext}
\usepackage{graphicx}
\usepackage{float}
\usepackage{tikz}
\usetikzlibrary{arrows.meta}
\definecolor{riem}{HTML}{E0EDFE}
\definecolor{lor}{HTML}{FEF2D8}
\usepackage[colorlinks=true,linkcolor=blue,
    citecolor=purple,      
    urlcolor=cyan,
]{hyperref}
\graphicspath{{../}}

\usepackage[parfill]{parskip}

\theoremstyle{plain}
\newtheorem{thm}{Theorem}[section]
\newtheorem{lem}[thm]{Lemma}
\newtheorem{prop}[thm]{Proposition}
\newtheorem{cor}[thm]{Corollary}
\newtheorem*{mainthm}{Theorem}
\newtheorem*{thmstar}{Theorem}
\newtheorem*{corstar}{Corollary}

\theoremstyle{definition}
\newtheorem{defn}[thm]{Definition}
\newtheorem{example}[thm]{Example}
\theoremstyle{remark}
\newtheorem{rem}[thm]{Remark}

\newcommand{\smallkeywords}[1]{\par\smallskip\noindent{\small\textbf{Keywords:} #1\par}}
\newcommand{\Rreg}{M_{R}}
\newcommand{\Lreg}{M_{L}}
\newcommand{\cMR}{\overline{M_{R}}}
\newcommand{\cML}{\overline{M_{L}}}
\newcommand{\HH}{\mathcal{H}}
\newcommand{\gt}{\widetilde{g}}
\newcommand{\Rad}{\operatorname{Rad}}

\newcommand{\intr}{\operatorname{int}}

\begin{document}

\title[Obstructions to Riemannian-Lorentzian Continuation]{Topological Obstructions to Riemannian-Lorentzian Continuation}

\author{Nathalie E. Rieger}
\address{Mathematics Department, Yale University, 219 Prospect Street, New Haven, CT 06520, USA}
\curraddr{Center for Cosmology and Particle Physics, Department of Physics, New York University, New York, NY 10003, USA}
\email{e.rieger@nyu.edu}

\author{\"{O}d\"{u}l Tetik}
\address{University of Vienna, Faculty of Physics, Mathematical Physics Group, Boltzmanngasse 5, 1090 Vienna, Austria}
\email{oeduel.tetik@univie.ac.at}

\date{}

\maketitle

\begin{abstract}
	Motivated by problems in instanton continuation, we establish geometro-topological selection rules for continuing a Riemannian region into a Lorentzian region through a smooth transverse change of signature. Given a collar-gluing $M=\overline{M_R} \cup_{\mathcal{H}}\overline{M_L}$ of a smooth manifold $M$ along a hypersurface $\mathcal{H}$, we show that an admissible signature-changing metric of the form $\widetilde g=g-fV^\flat\otimes V^\flat$ with $g$ a Riemannian metric, having Riemannian signature on $M_R$ and Lorentzian signature on $M_L$, exists if and only if a relative Euler class obstruction on $\overline{M_L}$ vanishes. This obstruction combines ordinary Euler characteristics $\chi$ in a manner that depends on which components of $\mathcal{H}$ represent big bangs and which represent big crunches. It also restricts the choice of $M_R$; for example, if $M$ is closed and $\dim M$ is even, then it requires$\chi(\overline{M_R})=\chi(M)$.
	
We also determine and classify the possible compactness types of $\overline{M_R}$, $\mathcal{H}$, and $\overline{M_L}$ occurring in such a collar-gluing construction, and construct explicit signature-changing metrics in each case. Topology change can be entirely contained within the Riemannian region when $M$ is the interior of a compact manifold with boundary.
\end{abstract}

\smallkeywords{signature change, singular semi-Riemannian metric, transverse type
change, relative Euler class, Morse theory, ends of manifolds, instanton, quantum cosmology.}

{
\setlength{\parskip}{0pt}
\tableofcontents
}

\section{Introduction}

The study of signature-changing manifolds is at the intersection of semi-Riemannian geometry, general relativity and quantum cosmology. An important motivation comes from the Hartle--Hawking no-boundary proposal~\cite{Hartle Hawking - Wave function of the Universe-1}, in which the big bang is replaced by an initial nonsingular Riemannian regime. Models of signature change also arise in classical relativity, loop quantum cosmology, and in Euclidean approaches to tunnelling~\cite{Ashtekar + Lewandowski - Background independent quantum gravity, Bojowald - Loop Quantum Cosmology, Cailleteau - Anomaly-freescalar perturbations, Ellis - Change of signature in classical relativity}. 

The question we address in this paper is global and topological:

\begin{quote}
Let $(M,g)$ be a Riemannian manifold and let $\HH\subset M$ be a hypersurface separating $M$ into two regions. When can $g$ be deformed into a transverse signature-changing metric with
transverse radical which is Riemannian on one side of $\HH$, Lorentzian on the other, and degenerate of corank one exactly along $\HH$?
\end{quote}

This perspective is inspired by instanton-continuation problems; see, for example, \cite{Kleban - Transitions}. Determining whether
a signature-changing manifold is an actual cosmological or gravitational instanton, however, requires additional analytic and dynamical input, such as field equations, boundary conditions, and regularity assumptions. These questions are not addressed in the present article. Instead, we develop a necessary geometric-topological foundation on which such questions can be posed. Our ansatz for deforming the Riemannian metric $g$ into a signature-changing metric of the desired type is
\begin{equation}\label{eq:ansatz}
\gt=g-fV^{\flat}\otimes V^{\flat}, \qquad f\in C^{\infty}(M),\quad V\in\Gamma(TM),
\end{equation}
where $f$ is a smooth function, $V$ a vector field, and $V^{\flat}=g(V,\cdot)$. This is the class of metrics studied in \cite{Hasse + Rieger-Transformation, Hasse + Rieger-Local Transformation, Hasse + Rieger-Loops}. It may be viewed as the signature-changing analogue of the construction $g-2V^\flat\otimes V^\flat$ recalled by~Hawking--Ellis~\cite[\S 2.6]{HawkingEllis1973}), which converts a Riemannian metric into a Lorentzian metric using a nowhere-vanishing vector field. The deformation is therefore carried out by two pieces of data: a scalar function $f$, which decides \emph{where} the signature changes, and a vector field $V$, which decides in which direction it changes. In particular, the orientation of \(V\) relative to a component of $\HH$ distinguishes a big-bang from a big-crunch transition, while on the Lorentzian side \(V\) defines the arrow of time. For every admissible continuation, \(V|_{M_L}\) is nowhere vanishing and timelike with respect to \(\widetilde g\). Hence the ansatz restricts attention to time-orientable Lorentzian regions, with \(V\) supplying a time orientation on each connected component of \(M_L\).

\subsection{Main results}

We call a decomposition 
\[
	M=\cMR\cup_{\HH}\cML
\]
a \emph{collar-gluing} if $\HH$ is a closed embedded hypersurface separating $M$, and if $\cMR,\,\cML$ are the closures of the two open regions in $M\setminus\HH$. We say that a collar-gluing is \emph{admissible} if there exist $f$ and $V$ such that the metric $\gt$ defined by~\eqref{eq:ansatz} is Riemannian on $\Rreg$, Lorentzian of index one on $\Lreg$, degenerate of corank one exactly along $\HH$, and a transverse type-changing metric with transverse radical along $\HH$, in the sense explained in Section~\ref{subsec:Preliminaries}. The Riemannian region together with its boundary, $\cMR$, is then the (potential) instanton, while $\Lreg$ is the Lorentzian universe.

We note that presenting the signature change by means of a collar-gluing, rather than beginning with a more abstract definition from which such a decomposition must later be recovered, is not an additional restriction. It follows directly from the transversality imposed on the signature change; see Remark~\ref{FORMIVA}.

Our main result formulates selection rules for a restricted class of signature-changing continuations. After identifying the obstruction to admissibility, we classify the possible global configurations according to the compactness properties of $\cMR$, $\cML$, and $\HH$. An additional topological obstruction appears when both $\cMR$ and $\cML$ are required to be noncompact while $\HH$ is compact: In that case, $M$ has at least two ends; see Section \ref{A227JE6}. The corresponding signature-changing metrics are then constructed in the main body of the paper.

\begin{mainthm}
\label{Theorem:Geometric Instanton Continuation}
Let \(M\) be a connected smooth manifold without boundary, with
\(\dim M=n\geq 2\), and let \(g\) be an auxiliary Riemannian metric on
\(M\). Suppose that \(M\) admits a collar-gluing decomposition
\[
    M=\overline{M_R}\cup_{\mathcal H}\overline{M_L},
\]
where $\partial\overline{M_R} = \partial\overline{M_L} = \mathcal H$ and $M_R:=\operatorname{int}(\overline{M_R})$, $M_L:=\operatorname{int}(\overline{M_L})$.
Then this collar-gluing determines an admissible Riemannian-Lorentzian continuation if and only if the Lorentzian side with boundary \(\overline{M_L}\) admits a smooth nowhere-vanishing vector field whose restriction to \(\mathcal H\) is transverse to \(\mathcal H\); equivalently, iff for some transverse boundary field \(\nu\), the relative Euler class
\[
    e(T\overline{M_L},\nu)
    \in
    H^n(\overline{M_L},\mathcal H;\mathbb Z_{w_1})
\]
vanishes. This obstruction vanishes on every
noncompact connected component of \(\overline{M_L}\), and on a compact connected component \(X\subset\overline{M_L}\) it is equivalent to the condition
\[
    \chi(X)=\chi(\partial_-X),
\]
where \(\partial_-X\subseteq\partial X\) is the union of boundary components along which \(\nu\) points into \(X\). In particular, if \(n\) is even, or if the transition is required to be uniformly big-bang-type or uniformly big-crunch-type, this reduces to $\chi(X)=0$ for every compact connected component \(X\subset\overline{M_L}\).

Moreover, the selection rules for the compactness types of $\cMR$, $\cML$ and $\HH$ are as follows, and each admissible configuration can be realized:
\begin{enumerate}
    \item[(A)] Compact \(\overline{M_R}\) and every component of
    \(\overline{M_L}\) noncompact: admissible.
    \item[(B)] Compact \(\overline{M_R}\) and compact \(\overline{M_L}\): admissible iff the relative Euler class of $\cML$ vanishes for some $\nu$ as above. 
    \item[(C)] Noncompact \(\overline{M_R}\), every component of \(\overline{M_L}\) noncompact, compact \(\mathcal H\): admissible iff \(M\) has at least two ends.
    \item[(D)] $\cMR$, $\HH$ both noncompact, every component of $\cML$ noncompact: admissible.
    \item[(E)] Noncompact \(\overline{M_R}\), compact
    \(\overline{M_L}\): admissible iff the relative Euler class of $\cML$ vanishes for some $\nu$ as above.
\end{enumerate}
In each case, $\cML$ can be allowed to have both compact and noncompact components provided that its compact components have vanishing relative Euler class for some $\nu$ as above.
\end{mainthm}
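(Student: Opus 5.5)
The proof splits into three parts: reducing admissibility to a vector-field condition, evaluating the relative Euler class, and the compactness classification.

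\textbf{Step 1: admissibility as a vector-field condition.} For the ansatz \eqref{eq:ansatz}, the eigenvalues of $\gt$ relative to $g$ are $1$ on $V^{\perp}$ and $1-f|V|_g^2$ in the direction of $V$. Hence $\gt$ is Riemannian where $f|V|^2<1$, Lorentzian where $f|V|^2>1$, and degenerate exactly where $f|V|^2=1$. The radical there is spanned by $V$.

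For the forward direction, suppose the continuation is admissible. Then $V$ is nowhere zero on $\cML$, since it is timelike on $M_L$ and spans the radical on $\HH$. Transversality of the radical means $V|_{\HH}\pitchfork\HH$.

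For the converse, take a nowhere-vanishing $V$ on $\cML$ that is transverse along $\HH$. Extend it smoothly to $M$, allowing zeros inside $M_R$ (away from a collar). Normalize $|V|_g=1$ near $\cML$. Choose $f=1+h$, where $h$ is a defining function of $\HH$ with $h>0$ on $M_L$, $h<0$ on $M_R$, and $dh\neq 0$ on $\HH$. Cut $f$ off to be $\le 0$ where $V$ may vanish. Then degeneracy occurs exactly on $\HH$ with corank one. Transversality in the sense of Section~\ref{subsec:Preliminaries} follows from $d(f|V|^2)\ne0$ on $\HH$ together with $V$ transverse, using the local characterization from the earlier Hasse--Rieger work.

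\textbf{Step 2: the relative Euler class.} This is obstruction theory for extending the section $\nu$ of the unit sphere bundle from $\HH$ over $\cML$. The only obstruction is the primary one, $e(T\cML,\nu)\in H^n(\cML,\HH;\mathbb Z_{w_1})$. On a noncompact connected component this group vanishes: by Lefschetz duality it equals $H_0$ of the complement relative to the ends, which is $0$. Alternatively, one can push zeros off to infinity along a ray.

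On a compact component $X$, the Poincaré--Hopf theorem for manifolds with boundary (the Morse--Pugh version) says that a field pointing outward on $\partial_+X$ and inward on $\partial_-X$ has index sum $\chi(X)-\chi(\partial_-X)$. So the obstruction vanishes iff $\chi(X)=\chi(\partial_-X)$.

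If $n$ is even, then $\partial X$ is odd-dimensional and closed, so $\chi(\partial_-X)=0$. In the uniform cases, $\partial_-X$ is either empty or equal to $\partial X$. Since $\chi(\partial X)=2\chi(X)$ for odd $n$ (and $\chi(\partial X)=0$ for even $n$), both cases give $\chi(X)=0$.

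\textbf{Step 3: the compactness classification.} In cases (A) and (D), every component of $\cML$ is noncompact, so the obstruction vanishes by Step 2 and Step 1 gives admissibility. Cases (B) and (E) are exactly the Euler condition. The last sentence of the theorem follows because the construction is componentwise.

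For (C), $\HH$ is compact while $\cMR$ and each component of $\cML$ are noncompact. A compact separating $\HH$ then splits $M$ into at least two noncompact pieces, each containing an end, so $M$ has at least two ends. Conversely, given two ends, one chooses compact $\HH$ separating them. The Euler obstruction vanishes by noncompactness, and the explicit metrics of the main body realize the configuration.

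\textbf{Main obstacle.} The delicate part is (C): one must make sure that the noncompact components of $\cML$ and $\cMR$ genuinely contain distinct ends of $M$, and build $\HH$ accordingly. I would handle this using the end-theory results of Section~\ref{A227JE6}. A secondary concern is verifying the precise transversality definition in Step 1.
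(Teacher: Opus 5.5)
Your Steps 1 and 2 follow the paper's route: the eigenvalue computation of Lemma~\ref{lem:master}, a defining function as in Lemma~\ref{lem:defining}, and Remark~\ref{rem:euler} with Lefschetz duality and the Morse--Pugh formula. They are correct up to two small points.
\begin{itemize}
\item Your cut-off must make $f\le 0$ wherever $|V|_g>1$ can occur, not merely where $V$ may vanish, because where $V=0$ one has $\gt=g$ anyway. The paper avoids cutting $f$ at all by taking $V=\psi\widehat U$ with $|V|_g\le 1$ everywhere.
\item Transversality of the type change needs no appeal to Hasse--Rieger. Your own eigenvalue computation gives $\det[\gt_{\mu\nu}]=(1-f|V|_g^2)\det[g_{\mu\nu}]$, so along $\HH$ the required condition is just $df\neq 0$.
\end{itemize}

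The genuine gap is in Step 3. For (A) and (D) you only show that a collar-gluing of that type, if one is given, is admissible, which is immediate from Step 2. The theorem also asserts that these configurations can be realized: the paper's Case (A) and Case (D) state that every connected open manifold admits such a collar-gluing. That existence is the real content of these two cases, and your proposal gives no construction for either.

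\textbf{Case (A).} A sublevel set $\mu^{-1}[v_1,h]$ of a proper function is compact. However, $\mu^{-1}[h,\infty)$ can have compact components for every regular value $h$ (Figure~\ref{fig:humps}). You must therefore absorb those compact components into $\cMR$ by hand, as in Lemma~\ref{lem:core}, to make every component of $\cML$ noncompact.

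\textbf{Case (D).} No proper function has a noncompact level set, so a noncompact separating $\HH$ with both sides noncompact has to be manufactured. The paper does this as follows:
\begin{itemize}
\item take a properly embedded ray (Lemma~\ref{lem:ray}, via a triangulation and K\"onig's lemma);
\item take a closed tubular neighbourhood $T\cong[0,\infty)\times D^{n-1}$ of it and round the corner;
\item set $\HH=\partial T\cong\mathbb{R}^{n-1}$, $\cMR=T$ and $\cML=\overline{M\setminus T}$, and check that $\cML$ is connected and noncompact.
\end{itemize}
Nothing in your outline supplies this.

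\textbf{Case (C).} You flag this as the main obstacle, but your sketch essentially suffices, and it can be made cleaner than the paper's argument. If $M$ has two ends, then some stage $K$ of an exhaustion by compact codimension-zero submanifolds with smooth boundary has at least two complementary components with noncompact closure. Let $\cML$ be the closure of one of them. Then $\HH$ is a union of components of $\partial K$, hence compact; $\cML$ is connected and noncompact; and $\cMR$ is noncompact. This avoids the paper's proper Morse function unbounded in both directions. The forward direction is the paper's argument in outline. Do not cite ``the explicit metrics of the main body'' as your realization: your Step 1 already produces them once the collar-gluing exists.
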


In each admissible case we show that there exist \(f\in C^\infty(M)\) and \(V\in\Gamma(TM)\), with \(V\) nowhere vanishing on \(M_L\cup\mathcal H\) and transverse to \(\mathcal H\), such that
\[
    \widetilde g=g-fV^\flat\otimes V^\flat
\]
is Riemannian on \(M_R\), Lorentzian of index one on \(M_L\), and degenerate of corank one precisely along \(\mathcal H\). Moreover, \(\widetilde g\) is a transverse type-changing metric with transverse radical along \(\mathcal H\).

\begin{rem}\label{rem:selection rules exhaustive}
	The conclusion and the selection rules are not dependent on Ansatz \ref{eq:ansatz} in that an arbitrary transverse Riemannian-Lorentzian type-changing metric with transverse radical (see Section \ref{subsec:Preliminaries}) corresponds to a collar-gluing of one of the five types listed in the theorem. Thus:
	\begin{quote}
		\emph{Equipping a smooth manifold with a transverse Riemannian-Lorentzian type-changing metric with transverse radical and time-orientable Lorentzian region is equivalent to presenting it as an admissible collar-gluing.}
	\end{quote}
\end{rem}

\begin{rem}The reason we formulate Cases (B) and (E) separately is that they are \emph{realised} differently.
\end{rem}

\begin{rem}\label{FORMIVA}
The collar-gluing hypothesis should not be viewed as an artificial additional assumption: Indeed, it records the hypersurface structure which is forced by transverse type change. If \(\widetilde g\) is a transverse type-changing metric and 
	\[
    \mathcal H
    =
    \{p\in M:\widetilde g_p\ \text{is degenerate, i.e.,}\ \det(\tilde g_p)=0\},
\]
the transversality condition
\[
    d\bigl(\det(\widetilde g_{\mu\nu})\bigr)\neq0
    \quad\text{on } \mathcal H
\]
implies that \(\mathcal H\) is a regular level set and thus a closed embedded hypersurface. Moreover, in the Riemannian-Lorentzian case considered here, the sign of \(\det(\widetilde g_{\mu\nu})\) distinguishes the Riemannian and Lorentzian regions; see Remark \ref{rem:twosided}. Thus \(\mathcal H\) is naturally two-sided and separating.
\end{rem}

\begin{rem}[the end-theoretic obstruction]
The obstruction in Case (C) is necessitated directly by the (non-)compactness requirements involved, so that the statement is really that there is no \emph{further} obstruction. For instance, $M\neq\mathbb{R}^n$ in Case (C) because $\mathbb{R}^n$ has only one end for $n\geq2$ (see Section \ref{A227JE6} and cf.\ Figure \ref{fig:compact-noncompact-H}).
\end{rem}

\begin{rem}[the obstruction is Lorentzian in origin but also affects the Riemannian region]\label{F1FECTV} As the theorem's statement makes clear, the obstruction originates entirely on the Lorentzian side (though it is not confined to it), because admissibility in our sense requires a nowhere-vanishing vector field on \(\overline{M_L}\) which is transverse to its boundary. This is the boundary analogue of the classical relation between Lorentzian metrics and nowhere-vanishing vector fields~\cite[Ch.~5, Prop.~37]{ONeill}. For a prescribed transverse boundary field, the obstruction to extending it to a nowhere-vanishing vector field on \(\overline{M_L}\) is the corresponding relative Euler class; see Remark~\ref{rem:euler}. On compact connected components this obstruction is expressed by the Euler-characteristic formula derived there, while on noncompact components it vanishes. In the metric produced by Ansatz~(1), this vector field is timelike on \(M_L\) and therefore supplies a time orientation on each connected component of \(M_L\).  There are two subtleties which are worked out in the main body of the paper:
\begin{itemize}
	\item When $\HH$ is disconnected, whether the boundary field points into $M_L$ or $M_R$ can change from component to component, and the relative obstruction takes into account (and is sensitive to) this decomposition of $\HH$. Figure \ref{fig:caps} illustrates this.
	\item The obstruction ``spills over'' into the Riemannian region through a Mayer--Vietoris identity relating $\chi(\overline{M_R})$, $\chi(\overline{M_L})$, $\chi(\mathcal H)$ and $\chi(M)$. See Corollary \ref{cor:MV}.
\end{itemize}
\end{rem}

\begin{rem}[C vs.\ D]
Cases (C) and (D) differ only in whether the transition hypersurface is compact (again,  cf.\ Figure \ref{fig:compact-noncompact-H}), which, in terms of our Ansatz is precisely the difference between a proper and a non-proper defining function. This makes our construction which realizes Case (D) quite different from the other cases in that it is, in a sense, ad hoc: we must manufacture noncompact separating hypersurfaces rather than control the collar-gluing via a Morse function (which is typically proper so that we can control compactness types) as we do in the other cases. Figure \ref{fig:tube} illustrates the simplest such construction.
\end{rem}

\begin{rem}[what is not claimed]
Let us emphasize what we do not claim here. Instantons are saddle points of Euclidean field equations, usually related to Lorentzian solutions by analytic continuation or Wick rotation, and known continuations tend to rely on high symmetry or on a real section of a complexified solution. We impose no field equations and do not attempt to replace or generalize such constructions. The theorem provides a selection rule in the strict sense: if the relative Euler class of the intended Lorentzian region does not vanish, or if the corresponding relations described in Remark~\ref{F1FECTV} fail, then no continuation of the type considered here exists. This is consistent with \cite{Kothawala}, which emphasizes that Wick rotation in curved spacetime is not, in general, equivalent to a change of signature without further assumptions on analyticity and causal-structure. Consequently, if no suitable triple $(g,f,V)$ exists, then the Riemannian manifold in question cannot serve, within this framework, as the precursor of a Lorentzian universe.
\end{rem}

\subsection{Preliminaries}\label{subsec:Preliminaries}

All manifolds, maps and tensor fields are smooth; manifolds are Hausdorff, second countable and without boundary unless we state otherwise. An \emph{open} manifold is noncompact and without boundary. We use the terminology of a signature-changing manifold from~\cite{Hasse + Rieger-Transformation, Hasse + Rieger-Local Transformation,Kossowski - Fold Singularities in Pseudo-Riemannian Geodesic Tubes,Kossowski - Pseudo-Riemannian Metric Singularities and the Extendability of Parallel Transport,Kossowksi + Kriele - Signature type change and absolute time in general relativity,Kossowski + Kriele - Transverse type changing pseudo Riemannian metrics with smooth curvature,Hasse + Rieger-Loops}.

\begin{defn}\label{def:ttcm}
Let $M$ be a smooth manifold without boundary of dimension $\dim M\geq2$, and let $\tilde{g}$ be a smooth symmetric $(0,2)$-tensor field on $M$. We call $(M,\tilde{g})$ a  \emph{transverse type-changing singular semi-Riemannian manifold} if the degeneracy locus 
\[
\mathcal{H:=}\{q\in M\!\!:\tilde{g}\!\!\mid_{q}\textrm{is\;degenerate}\}=\{q\in M\!\!:\det([\tilde{g}_{\mu\nu}](q))=0\}
\]
is non-empty and $\tilde{g}$ is a corank-$1$ transverse type-changing metric, meaning that for every $q\in\mathcal{H}$ and for any local coordinate system around $q$, one has
\[
d(\det[\tilde{g}_{\mu\nu}])_{q}\neq0.
\]
\end{defn}

Thus $\mathcal{\mathcal{H}\subset M}$ is the locus where the rank of $\tilde{g}$ fails to be maximal and where the bilinear type of $\tilde{g}$ changes. Moreover, $\mathcal{H}$ is a smoothly embedded hypersurface of $M$. At each point $q\in\mathcal{H}$, the radical is the subspace
\[
\textrm{Rad}_{q}(\tilde{g}):=\{w\in T_{q}M\mid\tilde{g}_{q}(w,v)=0\text{ for all }v\in T_{q}M\}\subset T_{q}M.
\]
In the transverse type-changing case considered here, this radical is one-dimensional. It may be either tangent or transverse to $\mathcal{H}$. In quantum cosmology applications, the transverse case is of particular interest because it models a smooth transition from a Riemannian region to a Lorentzian spacetime without treating the transition hypersurface as a singular past boundary. 

\begin{defn}
The radical of a metric $\tilde{g}$ is said to be \emph{transverse} to $\mathcal{H}$
if, for every $q\in\mathcal{H}$, $\textrm{Rad}_{q}(\tilde{g})\nsubseteq T_{q}\mathcal{H}$. Equivalently, 
\[
\textrm{Rad}_{q}(\tilde{g})\oplus T_{q}\mathcal{H}=T_{q}M\text{ for all }q\in\mathcal{H}.
\]
\end{defn}

\begin{defn}\label{def:collargluing}
A \emph{collar-gluing} of a connected manifold $M$ is a decomposition $M=\cMR\cup_{\HH}\cML$, where $\HH\subset M$ is a non-empty closed embedded hypersurface with $M\setminus\HH=\Rreg\sqcup\Lreg$ for disjoint non-empty open sets $\Rreg,\Lreg$ with closures $\cMR=\Rreg\cup\HH$, $\cML=\Lreg\cup\HH$. The open sets \(M_R\) and \(M_L\) are not required to be connected.

Such a \emph{collar-gluing is admissible} if there are $f\in C^{\infty}(M)$ and $V\in\Gamma(TM)$ for which $\gt=g-fV^{\flat}\otimes V^{\flat}$ is Riemannian on $\Rreg$, Lorentzian of index one on $\Lreg$, degenerate of corank one precisely along $\HH$, and a transverse type-changing metric with transverse radical along $\HH$.
\end{defn}

\begin{rem}\label{rem:twosided}
A separating hypersurface is two-sided, hence carries a tubular neighbourhood $c:\HH\times\mathbb{R}\xrightarrow{\cong}\mathcal{U}\subseteq M$ with $c(x,0)=x$, $c(\HH\times(0,\infty))\subset\Lreg$ and $c(\HH\times(-\infty,0))\subset\Rreg$. Indeed, let $\HH_0$ be a component of $\HH$ and $N$ a tubular neighbourhood of $\HH_0$ disjoint from the closed set $\HH\setminus\HH_0$. If $\HH_0$ were one-sided $N\setminus\HH_0$ would be connected and hence contained in one of $\Rreg,\Lreg$, contradicting $\HH_0=\cMR\cap\cML$. For the same reason the two sides of $N\setminus\HH_0$ lie in different regions, which fixes the orientation of $c$.

The requirement that $\cMR$ and $\cML$ be the closures of $\Rreg$ and $\Lreg$, and not merely the sets $\Rreg\cup\HH$ and $\Lreg\cup\HH$, cannot be dropped: in the open M\"obius band, let $\HH=\HH_{0}\sqcup\HH_{1}$ be the core circle $\HH_{0}$ together with a circle $\HH_{1}$ parallel to the boundary. Then $M\setminus\HH$ consists of two open annuli, and taking $\Rreg$ to be the outer and $\Lreg$ the inner one gives a decomposition into two disjoint open regions. But $\HH_{0}$ is one-sided and lies in $\overline{\Lreg}$ only, so that $\overline{\Rreg}=\Rreg\cup\HH_{1}\neq\Rreg\cup\HH$ (Figure~\ref{fig:mobius}).
\end{rem}

\begin{figure}[htbp]
  \centering
  \begin{tikzpicture}[line join=round,line cap=round,scale=0.78]
    \def\w{8.0}
    \fill[riem] (0,-1.8) rectangle (\w,1.8);
    \fill[lor]  (0,-0.9) rectangle (\w,0.9);
    \draw[line width=0.6pt,dashed] (0,1.8) -- (\w,1.8);
    \draw[line width=0.6pt,dashed] (0,-1.8) -- (\w,-1.8);
    \draw[line width=0.9pt] (0,-1.8) -- (0,1.8);
    \draw[line width=0.9pt] (\w,-1.8) -- (\w,1.8);
    \draw[-{Stealth[length=6pt]},line width=0.9pt] (0,-0.45) -- (0,0.45);
    \draw[-{Stealth[length=6pt]},line width=0.9pt] (\w,0.45) -- (\w,-0.45);
    \draw[line width=1.2pt] (0,0) -- (\w,0);
    \draw[line width=1.2pt] (0,0.9) -- (\w,0.9);
    \draw[line width=1.2pt] (0,-0.9) -- (\w,-0.9);
    \node at (2.0,1.35) {$\Rreg$};
    \node at (6.0,-1.35) {$\Rreg$};
    \node at (2.0,0.45) {$\Lreg$};
    \node at (6.0,-0.45) {$\Lreg$};
    \node[anchor=west] at (\w+0.25,0) {$\HH_{0}$ (core)};
    \node[anchor=west] at (\w+0.25,0.9) {$\HH_{1}$};
    \node[anchor=west] at (\w+0.25,-0.9) {$\HH_{1}$};
  \end{tikzpicture}
  \caption{The M\"obius band of Remark \ref{rem:twosided}.}
  \label{fig:mobius}
\end{figure}

\section{Preliminary results}\label{sec:Preliminary results}

\begin{lem}\label{lem:master}
Let $(M,g)$ be Riemannian and let $M=\cMR\cup_{\HH}\cML$ be a collar-gluing. Let $f\in C^{\infty}(M)$ and $V\in\Gamma(TM)$ be given which satisfy
\begin{enumerate}
  \item[(i)] $\Rreg=\{f<1\}$, $\HH=\{f=1\}$ and $\Lreg=\{f>1\}$.
  \item[(ii)] $g(V,V)=1$ on an open neighbourhood of $\cML$ in $M$, and $g(V,V)\leq1$ on $\Rreg$.
  \item[(iii)] $df(V)\neq0$ at every point of $\HH$.
\end{enumerate}
Then $\gt:=g-fV^{\flat}\otimes V^{\flat}$ is Riemannian on $\Rreg$, Lorentzian of index one on $\Lreg$ and degenerate of corank one on $\HH$. It is a transverse type-changing metric, and its radical along $\HH$ is $\Rad_{p}(\gt)=\mathbb{R}V_{p}$, which is transverse to $\HH$. In particular the collar-gluing is admissible.
\end{lem}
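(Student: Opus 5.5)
Our plan is to work pointwise with a $g$-orthonormal frame adapted to $V$, and then to compute the determinant of $\gt$ in order to check transversality. Fix $p\in M$ and suppose $V_p\neq0$. Put $e_1=V_p/|V_p|$ and complete it to a $g$-orthonormal basis $e_1,\dots,e_n$ of $T_pM$. In this basis $V^\flat\otimes V^\flat$ has the single nonzero entry $|V_p|^2$ in position $(1,1)$. Hence
\[
\gt_p=\operatorname{diag}\bigl(1-f(p)g(V,V)(p),\,1,\dots,1\bigr).
\]
If $V_p=0$, then $\gt_p=g_p$, which is Riemannian. We will read off the three regions from this formula.

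\emph{On $\Lreg$.} By (ii), $g(V,V)=1$ there, and $f>1$ by (i). So the first entry equals $1-f<0$, and $\gt$ is Lorentzian of index one with $V$ timelike.

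\emph{On $\HH$.} Here $f=1$ and $g(V,V)=1$, so the first entry vanishes. Thus $\gt$ is degenerate of corank exactly one, and $\Rad_p(\gt)=\mathbb{R}V_p$. By (iii), $df(V)\neq0$, so $V_p\notin T_p\HH=\ker df_p$; note that $\HH=\{f=1\}$ is a regular level set precisely because of (iii). The radical is therefore transverse.

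\emph{On $\Rreg$.} We have $f<1$ and $0\le g(V,V)\le1$. If $f\le 0$, the entry $1-fg(V,V)$ is $\ge1>0$. If $0<f<1$, it is $\ge 1-f>0$. So $\gt$ is positive definite on $\Rreg$. This case split on the sign of $f$ is the only slightly delicate point in the regional analysis, because $f$ may be negative and $g(V,V)$ may vanish.

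For transversality in the sense of Definition~\ref{def:ttcm}, we must show $d(\det[\gt_{\mu\nu}])\neq0$ on $\HH$ in any chart. We will use the matrix determinant lemma. In a chart, $[\gt]=[g]-f\,v v^T$ with $v=[g]V$, where $v$ denotes the component vector of $V^\flat$. This gives
\[
\det[\gt]=\det[g]\,\bigl(1-f\,v^T[g]^{-1}v\bigr)=\det[g]\,\bigl(1-f\,g(V,V)\bigr).
\]
On the open neighbourhood of $\cML$ where $g(V,V)\equiv1$, this becomes $\det[g](1-f)$. Differentiating at $p\in\HH$, where $1-f=0$, we get $d(\det[\gt])_p=-\det[g](p)\,df_p$. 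This is nonzero since $\det[g]>0$ and $df_p\neq0$ by (iii).

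We expect this determinant computation to be the main (though mild) obstacle. The key is that the identity $g(V,V)\equiv1$ holds on a full open neighbourhood of $\HH$, and not merely on $\HH$. That is what lets us avoid any contribution from the derivative of $g(V,V)$. Admissibility then follows directly from Definition~\ref{def:collargluing}.
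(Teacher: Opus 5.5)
Your proof is correct and follows essentially the same route as the paper's: a $g$-orthonormal frame adapted to $V$ giving $\gt_p=\operatorname{diag}(1-f\,g(V,V),1,\dots,1)$, a sign analysis of this entry on each region (including the case split on the sign of $f$ in $\Rreg$), and the matrix determinant lemma $\det[\gt]=(1-f\,g(V,V))\det[g]$. Combined with $g(V,V)\equiv1$ on a neighbourhood of $\HH$, this gives $d(\det[\gt])_p=-\det[g](p)\,df_p\neq0$, and transversality of the radical follows from $T_p\HH=\ker df_p$ together with (iii), exactly as in the paper.
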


\begin{proof}
Let $p\in M$ and put $\varphi:=1-f\cdot g(V,V)\in C^{\infty}(M)$. If $V_{p}=0$ then $\gt_{p}=g_{p}$ and $\varphi(p)=1$. If $V_{p}\neq0$, set $e_{1}:=V_{p}/\|V_{p}\|_{g}$ and complete to a $g$-orthonormal basis of $T_{p}M$; since $V^{\flat}_{p}=\|V_{p}\|_{g}e_{1}^{\flat}$, the matrix of $\gt_{p}$ is $\operatorname{diag}(\varphi(p),1,\dots,1)$. So in all cases $\gt_{p}$ is positive definite, degenerate of corank one, or Lorentzian of index one according as $\varphi(p)$ is positive, zero or negative. In any chart the matrix determinant lemma gives
\begin{equation}\label{eq:detphi}
\det[\gt_{\mu\nu}]=\varphi\cdot\det[g_{\mu\nu}].
\end{equation}

On $\Lreg$ we have $f>1$ and $g(V,V)=1$ by (ii), so $\varphi=1-f<0$. On $\HH$ we have $f=1$ and $g(V,V)=1$, so $\varphi=0$, and the radical is $\mathbb{R}V_{p}$. On $\Rreg$ we have $f<1$ and $0\leq g(V,V)\leq1$: if $f(p)\geq0$ then $f(p)g(V,V)_{p}\leq f(p)<1$, and if $f(p)<0$ then $f(p)g(V,V)_{p}\leq0<1$; in both cases $\varphi(p)>0$. Thus the degeneracy locus of $\gt$ is exactly $\HH$, with the stated types on either side.

For transversality of the type change: let $p\in\HH$. By~\eqref{eq:detphi} and $\varphi(p)=0$ we get $d(\det[\gt_{\mu\nu}])_{p}=\det[g_{\mu\nu}](p)\cdot d\varphi_{p}$, and $\det[g_{\mu\nu}](p)>0$. By (ii) the function $g(V,V)$ is constantly $1$ on an open neighbourhood of $\HH$, so $\varphi=1-f$ there and $d\varphi_{p}=-df_{p}$, which is nonzero by (iii).

For transversality of the radical: since $df_{p}\neq0$ and $\HH=f^{-1}(1)$ is an embedded hypersurface, $T_{p}\HH=\ker df_{p}$. By (iii), $df_{p}(V_{p})\neq0$, so $V_{p}\notin T_{p}\HH$.
\end{proof}

Condition (iii) says precisely that $V$ is transverse to $\HH$. Condition (ii) ensures that the eigenvalue in the $V$-direction crosses zero exactly where $f$ crosses $1$. The lemma imposes no condition on $V$ away from $\cML$ and its collar, where $V$ may vanish. 

Figure \ref{fig:master} illustrates our main construction which is ultimately based on this lemma.

\begin{figure}[htbp]
  \centering
  \begin{tikzpicture}[line join=round,line cap=round,scale=0.78]
    \def\hlev{7.5}
    \def\bodypath{(4.8,0.75)
      .. controls (3.1,0.75)  and (2.05,1.4)  .. (2.05,2.4)
      .. controls (2.05,3.2)  and (2.15,3.9)  .. (2.45,4.6)
      .. controls (2.8,5.3)   and (3.6,5.6)   .. (3.95,6.2)
      .. controls (3.95,6.8)  and (3.95,7.4)  .. (3.95,8.2)
      .. controls (3.95,9.0)  and (3.95,9.6)  .. (3.95,10.6)
      -- (5.95,10.6)
      .. controls (5.95,9.6)  and (5.95,9.0)  .. (5.95,8.2)
      .. controls (5.95,7.4)  and (5.95,6.8)  .. (5.95,6.2)
      .. controls (6.3,5.6)   and (7.1,5.3)   .. (7.45,4.6)
      .. controls (7.75,3.9)  and (7.85,3.2)  .. (7.85,2.4)
      .. controls (7.85,1.4)  and (6.5,0.75)  .. (4.8,0.75) -- cycle}
    \def\holepath{(4.95,2.9) ellipse (1.3 and 0.8)}

    \begin{scope}
      \clip (0,0.2) rectangle (11,9.7);
      \begin{scope}[even odd rule]
        \clip \bodypath \holepath;
        \fill[lor]  (0,0.2) rectangle (11,9.7);
        \fill[riem] (0,0.2) rectangle (11,\hlev);
        \draw[black!35,line width=0.5pt] (4.95,1.6) ellipse (2.7 and 0.3);
        \draw[black!35,line width=0.5pt] (2.85,2.9) ellipse (0.8 and 0.2);
        \draw[black!35,line width=0.5pt] (7.05,2.9) ellipse (0.8 and 0.2);
      \end{scope}
      \begin{scope}
        \clip (0,0.2) rectangle (11,9.2);
        \draw[line width=0.9pt] \bodypath;
      \end{scope}
      \begin{scope}
        \clip (0,9.2) rectangle (11,9.7);
        \draw[black!50,densely dotted,line width=0.9pt] \bodypath;
      \end{scope}
      \draw[line width=0.9pt] \holepath;
      \draw[line width=1.2pt] (4.95,\hlev) ellipse (1.0 and 0.26);
      \fill (4.8,0.75)  circle (2.2pt);
      \fill (4.95,2.1)  circle (2.2pt);
      \fill (4.95,3.7)  circle (2.2pt);
      \draw[-{Stealth[length=5pt]},line width=1.0pt] (4.35,7.75) -- (4.35,8.55);
      \draw[-{Stealth[length=5pt]},line width=1.0pt] (4.95,7.85) -- (4.95,8.65);
      \draw[-{Stealth[length=5pt]},line width=1.0pt] (5.55,7.75) -- (5.55,8.55);
      \draw[-{Stealth[length=5pt]},line width=1.0pt] (4.95,6.55) -- (4.95,7.35);
      \draw[-{Stealth[length=4.5pt]},line width=0.9pt] (5.35,5.75) -- (5.22,6.32);
      \draw[-{Stealth[length=4pt]},line width=0.8pt]  (5.95,5.1)  -- (5.75,5.52);
      \draw[-{Stealth[length=3.5pt]},line width=0.7pt](6.4,4.75)  -- (6.24,5.0);
      \draw[line width=0.8pt] (6.62,4.55) circle (2.4pt);
    \end{scope}
    \draw[black!55,-{Stealth[length=5pt]},line width=0.8pt] (4.95,9.8) -- (4.95,10.45);
    \draw[black!60,-{Stealth[length=5pt]},line width=0.8pt] (0.8,0.5) -- (0.8,8.4);
    \node[black!60] at (0.8,8.75) {$\mu$};
    \draw[black!60,line width=0.8pt] (0.6,\hlev) -- (1.0,\hlev);
    \node at (0.33,\hlev) {$h$};
    \draw[black!45,dashed,line width=0.7pt] (1.0,\hlev) -- (8.8,\hlev);
    \node[align=center] at (2.65,8.85) {$\cML$\\[1pt] $f>1$};
    \draw[black!70,-{Stealth[length=4pt]},line width=0.6pt] (3.25,8.7) -- (3.85,8.45);
    \node[align=center] at (3.3,4.75) {$\cMR$\\[1pt] $f<1$};
    \node[anchor=west] at (6.55,8.6) {$V$};
    \draw[black!70,-{Stealth[length=4pt]},line width=0.6pt] (6.5,8.55) -- (5.7,8.4);
    \node[anchor=west] at (6.9,7.0) {$\HH$};
    \draw[black!70,-{Stealth[length=4pt]},line width=0.6pt] (6.85,7.05) -- (5.95,7.4);
    \node[anchor=west] at (6.85,4.55) {$V=0$};
  \end{tikzpicture}
  \caption{The main construction with $\HH$ connected. A proper Morse function $\mu$ and a regular value $h$  give $\cMR=\mu^{-1}([v_{1},h])$ and $\cML=\mu^{-1}([h,\infty))$, and a shift $f=\rho\circ\mu$ with $\rho$ increasing through $1$ at $h$ satisfies (i) of Lemma \ref{lem:master}. Along $\HH$ the field $V$ is the unit field dual to $d\mu$, transverse to $\HH$ and pointing into $\cML$ because $\mu$ increases there, making the transition a big bang. Over $\cML$ the field has to be extended without zeros, and this is the only obstruction (Remark \ref{rem:euler}, Proposition \ref{prop:euler}). Over $\cMR$ it may be extended in any way with $g(V,V)\leq1$, e.g.\ damped to zero away from a collar of $\HH$.}
  \label{fig:master}
\end{figure}

\subsection{Existence of the transformation function}

\begin{lem}\label{lem:defining}
Every collar-gluing $M=\cMR\cup_{\HH}\cML$ admits an $f\in C^{\infty}(M)$ with $\Rreg=\{f<1\}$, $\HH=\{f=1\}$ and $\Lreg=\{f>1\}$, and $df\neq0$ along $\HH$.
\end{lem}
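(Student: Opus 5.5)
We build $f$ by patching a function that is linear in the collar coordinate near $\HH$ with the constants $0$ on $\Rreg$ and $2$ on $\Lreg$ away from $\HH$. By Remark~\ref{rem:twosided}, $\HH$ has a tubular neighbourhood
\[
c:\HH\times\mathbb{R}\xrightarrow{\cong}\mathcal U\subseteq M
\]
with $c(x,0)=x$, $c(\HH\times(0,\infty))\subset\Lreg$ and $c(\HH\times(-\infty,0))\subset\Rreg$. Let $t:=\operatorname{pr}_2\circ c^{-1}:\mathcal U\to\mathbb{R}$ be the collar coordinate. It is smooth, $\HH=\{t=0\}$, and $dt\neq0$ everywhere on $\mathcal U$. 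We first shrink the collar so that its closure behaves well. Since $\HH$ is closed, we may choose a smooth positive function $\varepsilon$ on $\HH$ with $\varepsilon<1$ such that
\[
\mathcal U':=c(\{(x,s):|s|<\varepsilon(x)\})
\]
has closure in $M$ equal to $c(\{|s|\le\varepsilon(x)\})$. This is the standard shrinking of a tubular neighbourhood and ensures that the collar does not accumulate anywhere else in $M$. Replacing $t$ by $t/\varepsilon$ we may assume $\varepsilon\equiv1$, so that $\mathcal U'=\{|t|<1\}$.

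Next choose a smooth cutoff $\psi:\mathbb{R}\to[0,1]$ with $\psi\equiv1$ on $[-\tfrac12,\tfrac12]$ and $\psi\equiv0$ outside $(-1,1)$, and a smooth step $\sigma:\mathbb{R}\to[0,2]$ with $\sigma(s)=0$ for $s\le-\tfrac12$, $\sigma(s)=2$ for $s\ge\tfrac12$, $\sigma(0)=1$, and $\sigma'>0$ on $(-\tfrac12,\tfrac12)$. Define $f$ as follows:
\[
f=\begin{cases}
\psi(t)\,\sigma(t)+(1-\psi(t))\,\sigma_\infty(t) & \text{on } \mathcal U',\\
0 & \text{on } \Rreg\setminus\mathcal U',\\
2 & \text{on } \Lreg\setminus\mathcal U',
\end{cases}
\]
where $\sigma_\infty(s)=0$ for $s<0$ and $\sigma_\infty(s)=2$ for $s>0$. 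In fact the simpler formula $f=\sigma(t)$ on $\mathcal U'$ already works, since $\sigma(t)$ is $0$ for $t\le-\tfrac12$ and $2$ for $t\ge\tfrac12$, so it matches the constants on the overlaps. So we take $f=\sigma(t)$ on $\mathcal U'$, $f=0$ on $\Rreg\setminus\{t>-\tfrac12\}$, and $f=2$ on $\Lreg\setminus\{t<\tfrac12\}$.

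We then check that $f$ has the required properties.

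\textbf{Smoothness.} The three defining sets are open, or have open interiors covering $M$: the sets $\mathcal U'$, $\Rreg\setminus c(\{t\ge-\tfrac12\})$ and $\Lreg\setminus c(\{t\le\tfrac12\})$ form an open cover of $M$. Their openness uses precisely that $c(\{|t|\le\tfrac12\})$ is closed in $M$, which is the shrinking step. On the overlaps the three definitions agree, so $f$ is smooth.

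\textbf{Level sets.} On $\mathcal U'$ we have $f=\sigma(t)$, which is $<1$ exactly when $t<0$, $=1$ exactly when $t=0$, and $>1$ exactly when $t>0$, because $\sigma$ is strictly increasing on $(-\tfrac12,\tfrac12)$ with $\sigma(0)=1$. Outside $\mathcal U'$, $f$ is $0$ on $\Rreg$ and $2$ on $\Lreg$. Together these give $\Rreg=\{f<1\}$, $\HH=\{f=1\}$ and $\Lreg=\{f>1\}$.

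\textbf{Nondegeneracy.} On $\HH$,
\[
df=\sigma'(0)\,dt\neq0,
\]
since $\sigma'(0)>0$ and $dt\neq0$.

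The only real obstacle is the closedness of the shrunken collar when $\HH$ is noncompact, since a naive constant-width collar might accumulate in $M$. Two routes handle this. The first is to pick $\varepsilon$ via a proper exhaustion function and use local finiteness. The second avoids collars entirely: take a smooth function $u$ on $M$ with zero set exactly $\HH$, positive on $\Lreg$ and negative on $\Rreg$, and set $f=1+u$. Such a $u$ exists by Whitney's theorem on zero sets of smooth functions applied to the closed sets $\cMR$ and $\cML$ separately, combined with a signed local defining function near $\HH$ via a partition of unity. The sign conditions are preserved under convex combinations, which makes the patching work; $du\neq0$ on $\HH$ is then ensured by using $t$ near $\HH$. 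I would present this partition-of-unity route as the backup.
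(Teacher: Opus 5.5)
Your construction is correct and is essentially the paper's: use a collar coordinate near $\HH$, and glue it to locally constant values on $\Rreg$ and $\Lreg$ away from $\HH$. The paper only uses a different gluing device, $f=1+\lambda\theta+(1-\lambda)\varepsilon$ with $\varepsilon=\pm1$ on $\Lreg,\Rreg$ and a cutoff $\lambda$ supported in $\mathcal U$ with $\lambda\equiv1$ near $\HH$; since $\theta$ and $\varepsilon$ have the same sign on $\mathcal U\setminus\HH$, the sign of $f-1$ is automatic, so neither your (valid) collar-shrinking step nor the Whitney-theorem backup is needed.
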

\begin{proof}
Let $c:\HH\times\mathbb{R}\xrightarrow{\cong}\mathcal{U}$ be as in Remark \ref{rem:twosided} and let $\theta:\mathcal{U}\to\mathbb{R}$ be the second coordinate. Choose $\lambda\in C^{\infty}(M,[0,1])$ with $\operatorname{supp}\lambda\subset\mathcal{U}$ and $\lambda\equiv1$ on a neighbourhood of $\HH$, and define $\varepsilon:M\setminus\HH\to\{\pm1\}$ by  $\varepsilon|_{\Rreg}=-1$, $\varepsilon|_{\Lreg}=+1$. Since $1-\lambda$ vanishes near $\HH$, the product $(1-\lambda)\varepsilon$ extends smoothly by $0$ across $\HH$, and we may set
\[
  f:=1+\lambda\theta+(1-\lambda)\varepsilon\in C^{\infty}(M),
\]
with $\lambda\theta$ understood to be $0$ outside of $\mathcal{U}$. We have $f>1$ on $\Lreg$, and symmetrically $f<1$ on $\Rreg$. Near $\HH$ we have $\lambda\equiv1$, so $f=1+\theta$ and $df=d\theta\neq0$.
\end{proof}

In other words, no global construction is needed for $f$. One may of course obtain $f$ by composing a global Morse function with a reparametrization of $\mathbb{R}$, as suggested in~\cite{Hasse + Rieger-Transformation}. The global function is, however, never needed for the metric: only the behaviour of $f$ in a bicollar of $\HH$ matters, and there any defining function will do. Nonetheless, we use Morse theory below for two other purposes: to produce hypersurfaces with prescribed compactness properties, and to control where topology change occurs.

\subsection{Existence of the vector field}

\begin{rem}[The  relative Euler class]\label{rem:euler}
Let $X$ be a smooth $n$-manifold with boundary and let $v$ be a nowhere-vanishing vector field along $\partial X$, everywhere transverse to it. Such $v$ exist, and any two which point the same way along each boundary component are homotopic through transverse fields. The complete obstruction to extending $v$ to a nowhere-vanishing vector field on $X$ is the relative Euler class $e(TX,v)\in H^{n}(X,\partial X;\mathbb{Z}_{w})$, the coefficients being twisted by the first Stiefel--Whitney class $w_{1}(TX)$, which vanishes iff $X$ is orientable \cite{MilnorStasheff}.\footnote{Given a section $v$ of $S(TX)|_{\partial X}$ the restriction of the sphere bundle of the tangent bundle to the boundary, the obstructions to extending it over $(X,\partial X)$ lie in $H^{k+1}(X,\partial X;\pi_{k}(S^{n-1}))$. Since $S^{n-1}$ is $(n-2)$-connected, only $k=n-1$ contributes.} We record its two evaluations.

\emph{$X$ noncompact:} If every component of $X$ is noncompact then \[ H^{n}(X,\partial X;\mathbb{Z}_{w})\cong H^{\mathrm{lf}}_{0}(X)=0 \] by Lefschetz duality with locally-finite (Borel--Moore) homology. On a connected noncompact space we have $H^{\mathrm{lf}}_{0}=0$, so the obstruction vanishes.

\emph{$X$ compact:} If $X$ is compact and connected then $H^{n}(X,\partial X;\mathbb{Z}_{w})\cong\mathbb{Z}$, and under this isomorphism $e(TX,v)$ is the sum of the indices at the zeroes of a generic extension of $v$. By the Poincar\'e--Hopf theorem with boundary conditions in the form due to Morse~\cite{Morse1929} (see also~\cite{Pugh1968}), we have
\begin{equation}\label{eq:morse}
e(TX,v)=\chi(X)-\chi(\partial_{-}X),
\end{equation}
where $\partial_{-}X\subseteq\partial X$ is the union of the boundary components along which $v$ points \emph{into} $X$.\footnote{Indeed, Pugh's formula \cite[\S1]{Pugh1968} states that the index equals
\(
  \chi(X)-\chi(\partial X)+\sum_{i\geq1}\bigl(\chi(R_{-}^{i})-\chi(\Gamma^{i})\bigr),
\)
where $R_{-}^{1}\subseteq\partial X$ is the closure of the set along which $v$ points out of $X$, $\Gamma^{1}\subseteq\partial X$ is the locus where $v$ is tangent to $\partial X$, and $R_{-}^{i},\Gamma^{i}\subseteq\Gamma^{1}$ for $i\geq2$ \cite[\S2]{Pugh1968}. (Pugh writes $\chi(A,B)=\chi(A)-\chi(B)$.) For $v$ transverse to $\partial X$ we have $\Gamma^{1}=\emptyset$, so that the genericity hypothesis of \cite[\S5]{Pugh1968} is vacuous, and $R_{-}^{i}=\Gamma^{i}=\emptyset$ for $i\geq2$, and $R_{-}^{1}=\partial_{+}X:=\partial X\setminus\partial_{-}X$. Since $\partial X=\partial_{-}X\sqcup\partial_{+}X$, the sum becomes
\(
  \chi(X)-\bigl(\chi(\partial_{-}X)+\chi(\partial_{+}X)\bigr)+\chi(\partial_{+}X)=\chi(X)-\chi(\partial_{-}X),
\)
giving \eqref{eq:morse}.} 
In particular $e(TX,v)=\chi(X)$ for a uniformly outward $v$ and $(-1)^{n}\chi(X)$ for a uniformly inward one, using that $\chi(\partial X)=(1+(-1)^{n-1})\chi(X)$ for compact $X$ \cite[Proposition 18.6.2]{tomDieck} (from which one derives that manifolds of odd Euler characterstic do not bound).

We should note two practical points. First, it is the specific requirement that $v$ itself be extended into the interior which may be obstructed, not that there exist some nowhere-vanishing vector field on $X$: the closed $n$-disc has a nowhere-vanishing vector field, but not one which is inward-pointing along $S^{n-1}$. Second, if $\partial X$ is disconnected then the choice of $\partial_{-}X$ matters, and \eqref{eq:morse} can vanish for a compact $X$ with $\chi(X)\neq0$. For $n$ even this cannot happen, $\partial X$ being closed and odd-dimensional,\footnote{Closed odd-dimensional manifolds have $\chi=0$ by Poincar\'e duality.} so that $\chi(\partial_{-}X)=0$ and $e(TX,v)=\chi(X)$ for every choice. For $n$ odd it does happen: see Example \ref{ex:caps}.
\end{rem}

\begin{prop}\label{prop:euler}
Let $(M,g)$ be Riemannian and let $M=\cMR\cup_{\HH}\cML$ be a collar-gluing. The following are equivalent.
\begin{enumerate}
  \item The collar-gluing is admissible.
  \item $\cML$ carries a nowhere-vanishing vector field transverse to
$\HH=\partial\cML$ along $\HH$.
  \item For some transverse boundary field $\nu$ along $\HH$, the relative Euler
class $e(T\cML,\nu)\in H^{n}(\cML,\HH;\mathbb{Z}_{w})$ vanishes.
  \item Every compact component $X$ of $\cML$ admits a decomposition
$\partial X=\partial_{+}X\sqcup\partial_{-}X$ into unions of connected components with $\chi(X)=\chi(\partial_{-}X)$.
\end{enumerate}
If $\dim M$ is even, or if $\nu$ is required to point uniformly into $\Lreg$ (or uniformly into $\Rreg$) along $\HH$, then (4) simplifies to: every compact component of $\cML$ has vanishing Euler characteristic.
\end{prop}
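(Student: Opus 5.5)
I will prove the cycle (1)$\Rightarrow$(2)$\Rightarrow$(3)$\Rightarrow$(4)$\Rightarrow$(2)$\Rightarrow$(1), so that everything is reduced to Lemma~\ref{lem:master}, Lemma~\ref{lem:defining} and Remark~\ref{rem:euler}.

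\emph{(1)$\Rightarrow$(2).} Suppose $\gt=g-fV^\flat\otimes V^\flat$ is admissible. On $\Lreg$ the metric $\gt$ is Lorentzian, but $\gt=g$ on $\ker V^\flat$. Hence $V$ cannot vanish anywhere on $\Lreg$, since otherwise $\gt_p=g_p$ would be Riemannian. On $\HH$ the radical is nonzero and $\gt=g$ on $\ker V^\flat$, so $V_p\neq0$ and $\Rad_p(\gt)=\mathbb{R}V_p$. By the transverse-radical condition, $V$ is then transverse to $\HH$. Thus $V|_{\cML}$ is the required field.

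\emph{(2)$\Leftrightarrow$(3)$\Leftrightarrow$(4).} For (2)$\Rightarrow$(3), restrict the field to the boundary to obtain $\nu$; the relative Euler class of $\nu$ vanishes because $\nu$ extends. For (3)$\Rightarrow$(2), use that the vanishing class is the complete obstruction, as stated in Remark~\ref{rem:euler}. This applies componentwise, since $H^n(\cML,\HH;\mathbb{Z}_w)$ is a product over the components of $\cML$. By the same remark, noncompact components contribute nothing, and on a compact component $X$ the class is $\chi(X)-\chi(\partial_-X)$ by~\eqref{eq:morse}. Given (4), on each compact $X$ define $\nu$ to point inward exactly on $\partial_-X$. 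On noncompact components take any transverse field. The class then vanishes, which gives (3) and hence (2).

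\emph{(2)$\Rightarrow$(1).} This is the main step: I must produce $f$ and $V$ satisfying Lemma~\ref{lem:master}. Take $f$ from Lemma~\ref{lem:defining}, and let $W$ be the nowhere-vanishing field on $\cML$ from (2).
\begin{itemize}
\item First extend $W$ past $\HH$ into the collar $c(\HH\times(-\delta,0])$. Since $W$ is transverse along $\HH$, it stays nonzero and transverse to the level sets on a small collar. Concretely, push $W|_{\HH}$ along $\partial_\theta$-compatible coordinates and glue with a partition of unity; nonvanishing is an open condition, so it persists near the compact pieces. Where $\HH$ is noncompact, shrink the collar width by a positive function.
\item Normalize $V:=W/\|W\|_g$ on an open neighbourhood $U$ of $\cML$.
\item Multiply by a cutoff $\lambda\in C^\infty(M,[0,1])$ that equals $1$ on a smaller neighbourhood of $\cML$ and is supported in $U$. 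Then $g(V,V)=\lambda^2\le1$ on $\Rreg$ and $g(V,V)=1$ near $\cML$, which is (ii).
\end{itemize}
It remains to check (iii), namely $df(V)\neq0$ on $\HH$. This holds because $df\neq0$ with $\ker df_p=T_p\HH$, and $V$ is transverse there. Lemma~\ref{lem:master} now gives admissibility. The only delicate point is the extension across $\HH$ when $\HH$ is noncompact, which the variable-width collar handles.

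\emph{Final simplification.} If $n$ is even, then $\partial X$ is closed and odd-dimensional, so $\chi(\partial_-X)=0$. If $\nu$ points uniformly into $\Lreg$, then $\partial_-X=\partial X$. If $\nu$ points uniformly into $\Rreg$, then $\partial_-X=\emptyset$ and the condition reads $\chi(X)=0$ directly. In the inward case, $\chi(\partial X)=(1+(-1)^{n-1})\chi(X)$ reduces the condition to $(-1)^n\chi(X)=0$. In every case (4) becomes $\chi(X)=0$, as claimed.
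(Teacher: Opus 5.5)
Your argument follows the paper's proof almost step for step. It uses the same reasoning for (1)$\Rightarrow$(2), where you in fact justify $\Rad_p(\gt)=\mathbb{R}V_p$ a little more carefully. It reads Remark~\ref{rem:euler} componentwise for (2)$\Leftrightarrow$(3)$\Leftrightarrow$(4), uses the same extend--normalize--cut off recipe with Lemmas~\ref{lem:defining} and~\ref{lem:master} for (2)$\Rightarrow$(1), and has the same final simplification.

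The one step that does not work as written is the extension of $W$ across $\HH$. You push only the boundary values $W|_{\HH}$ along the collar into $c(\HH\times(-\delta,0])$ and glue the result to $W$ on $\cML$. The resulting field is continuous but in general not smooth across $\HH$. In collar coordinates its components are constant in $\theta$ for $\theta\le 0$, while $W$ can have arbitrary $\theta$-derivatives at $\theta=0^{+}$. Lemma~\ref{lem:master} needs $V\in\Gamma(TM)$ to be smooth, so smoothness across $\HH$ is the point that must be handled. Noncompactness of $\HH$, which you flag as the delicate issue, is not.

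The paper extends $W$ itself rather than its boundary values. In boundary charts the components of $W$ extend smoothly across the boundary (Seeley's extension theorem). These local extensions are glued with a partition of unity into a smooth field $\widehat W$ on $M$ that agrees with $W$ on $\cML$. Nonvanishing near $\cML$ is then automatic: the zero set of $\widehat W$ is closed and disjoint from $\cML$, so its complement is an open neighbourhood of $\cML$. No variable-width collar is needed.

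If you want to keep your collar picture, first modify $W$ on a thin (variable-width) $\Lreg$-side collar to $\beta Y+(1-\beta)W$. Here $Y$ is the $\theta$-constant extension of $W|_{\HH}$, and $\beta$ is a cutoff equal to $1$ near $\HH$. This field stays nonvanishing because $d\theta(Y)$ and $d\theta(W)$ have the same sign near $\HH$; that is where your transversality-to-level-sets remark is actually used. It coincides with $Y$ near $\HH$, so extending it by $Y$ across $\HH$ is smooth.
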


\begin{proof}
$(2)\Leftrightarrow(3)\Leftrightarrow(4)$ is Remark~\ref{rem:euler} applied componentwise.

$(1)\Rightarrow(2)$: Let $(f,V)$ realize the admissibility. At $p\in\cML$ the tensor $\gt_{p}$ is not positive definite so $V_{p}\neq0$. Along $\HH$ the radical is spanned by $V$ and is transverse to $\HH$ by hypothesis, so $V|_{\cML}$ is as required.

$(2)\Rightarrow(1)$: Let $U$ be nowhere vanishing on $\cML$ and transverse to $\HH$. Since $\cML\subset M$ is a closed domain with smooth boundary, $U$ extends to a smooth vector field on an open neighbourhood of $\cML$: in boundary charts extend the components by Seeley's extension theorem~\cite{Seeley} and glue with a partition of unity. Deleting the zero set of the extension, which is closed and disjoint from $\cML$, and normalizing, we obtain an open neighbourhood $W\supseteq\cML$ and a $g$-unit field $\widehat U$ on $W$ with $\widehat U|_{\HH}=U|_{\HH}$, hence transverse to $\HH$. Choose $\psi\in C^{\infty}(M,[0,1])$ with $\operatorname{supp}\psi\subset W$ and $\psi\equiv1$ on an open neighbourhood $W'\subseteq W$ of $\cML$, and set $V:=\psi\widehat U$, extended by $0$. Then $V$ is smooth, $g(V,V)=1$ on $W'$ and $g(V,V)=\psi^{2}\leq1$ everywhere. Take $f$ from Lemma~\ref{lem:defining}. Along $\HH$ we have $V=\widehat U$ transverse to $\HH=f^{-1}(1)$ and $df\neq0$, so $df(V)\neq0$ there. Now Lemma \ref{lem:master} concludes.

The final statement follows from the same Remark: for $n$ even we have $\chi(\partial_{-}X)=0$ for every choice, and for uniform $\nu$ we have $\partial_{-}X=\emptyset$ or $\partial_{-}X=\partial X$, where $\chi(X)-\chi(\partial X)=(-1)^{n}\chi(X)$.
\end{proof}

\begin{cor}\label{cor:gindep}
Admissibility does not depend on $g$: if a collar-gluing is admissible for one Riemannian metric on $M$, it is admissible for every one.
\end{cor}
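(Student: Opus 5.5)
The plan is to reduce the corollary to Proposition~\ref{prop:euler}. That proposition shows admissibility with respect to a fixed Riemannian metric $g$ is equivalent to condition~(2): $\cML$ carries a nowhere-vanishing vector field transverse to $\HH$ along $\HH$. Condition~(2) refers only to the smooth structure of the pair $(\cML,\HH)$ and not to $g$, so it is the same for every choice of $g$.

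Concretely, suppose the collar-gluing is admissible for $g_1$, and let $g_2$ be any other Riemannian metric on $M$. Applying the implication $(1)\Rightarrow(2)$ of Proposition~\ref{prop:euler} with $g=g_1$ gives a vector field $U$ on $\cML$ that is nowhere vanishing and transverse to $\HH$. In that proof, $U$ is the restriction $V|_{\cML}$. The argument uses only that $V_p\neq 0$ wherever $\gt_p$ fails to be positive definite, together with the transversality of the radical $\mathbb{R}V$, which is part of the definition of admissibility. We then apply $(2)\Rightarrow(1)$ with $g=g_2$. That step builds a $g_2$-unit extension $\widehat U$ near $\cML$, cuts it off with $\psi$, takes $f$ from Lemma~\ref{lem:defining}, and invokes Lemma~\ref{lem:master} with $g_2$. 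Its only input is $U$ and the ambient metric used for normalization, so $g_2$ can be fed in directly.

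The one point needing care is to confirm that nothing in the $(2)\Rightarrow(1)$ construction secretly depends on the original metric. Two checks settle this. First, normalization to $g_2$-unit length rescales $\widehat U$ by a positive function, so it preserves both nonvanishing and transversality to $\HH$. Second, $f$ and the condition $df(V)\neq 0$ are metric-free, and Lemma~\ref{lem:master} holds for an arbitrary Riemannian $g$. Since transversality of a vector to $T_p\HH$ is purely linear-algebraic, I expect no real obstacle. The corollary is essentially a restatement of the metric-independence of condition~(2), or equivalently of the relative Euler class condition~(3), which lives in $H^n(\cML,\HH;\mathbb{Z}_{w})$ and depends only on the topology.
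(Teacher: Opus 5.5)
Your proposal is correct and matches the paper's proof, which is the one-line observation that condition~(2) of Proposition~\ref{prop:euler} makes no reference to $g$. Your explicit passage through $(1)\Rightarrow(2)$ for $g_1$ and then $(2)\Rightarrow(1)$ for $g_2$ just spells that argument out.
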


\begin{proof}
Condition (2) of Proposition~\ref{prop:euler} is independent of $g$.
\end{proof}

Thus the Riemannian metric enters only through the tensor $\gt$ itself and has no bearing on whether such a tensor exists. Proposition~\ref{prop:euler} is the first half of the main theorem, and it determines the shape of the second. Since the criterion refers only to $\cML$, and is vacuous unless $\cML$ has a compact component, it remains to decide which compactness types of collar-gluing a given $M$ admits. For how the obstruction spills over into the Riemannian region, see Corollary \ref{cor:MV}.

\section{Geometro-topological selection rules}

\subsection{Preliminary lemmata}
Morse theory is typically applied in the compact setting, where there are finitely many critical points. The underlying machinery, however, does not require compactness. What is lost is the finiteness of the critical set but what remains (and what we use) is properness.

\begin{lem}\label{lem:propermorse}
Every smooth manifold $M$ admits a proper Morse function $\mu:M\to[0,\infty)$, and such a $\mu$ attains a minimal critical value.
\end{lem}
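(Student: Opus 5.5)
The plan is to build $\mu$ in two steps: first a smooth proper function $h:M\to[0,\infty)$, then a small perturbation of $h$ that is Morse and still proper. For the first step, take a countable locally finite partition of unity $\{\phi_k\}_{k\ge1}$ on $M$ with each $\operatorname{supp}\phi_k$ compact; this exists because $M$ is Hausdorff and second countable. Put $h:=\sum_k k\,\phi_k$, which is smooth and nonnegative. If $h(p)\le N$, then $\phi_k(p)>0$ for some $k\le N$, since otherwise $h(p)=\sum_{k>N}k\phi_k(p)>N\sum_k\phi_k(p)=N$. Hence $h^{-1}([0,N])\subseteq\bigcup_{k\le N}\operatorname{supp}\phi_k$, which is compact, so $h$ is proper. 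If $M$ happens to be compact, any smooth function will do.

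For the second step, embed $M\hookrightarrow\mathbb{R}^m$ by Whitney and consider the height perturbations $h_a:=h+\langle a,\cdot\rangle$. The standard fact is that $h_a$ is Morse for almost every $a\in\mathbb{R}^m$: it is Morse exactly when $a$ is a regular value of the map $p\mapsto -\nabla(h|_M)$ composed with the normal bundle, and Sard gives this (Milnor's argument in \emph{Morse theory}, \S6 and the appendix of Guillemin--Pollack). Properness could fail, though, because $\langle a,\cdot\rangle$ is unbounded. To fix this I would use a weighted perturbation. Choose the embedding, or compose with a radial diffeomorphism, so that $|x|\le h(x)+1$ on $M$, and take $a$ with $|a|<1/2$. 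Then $h_a\ge h-|a||x|\ge \tfrac12 h-\tfrac12$, so sublevel sets of $h_a$ sit inside sublevel sets of $h$ and $h_a$ stays proper.

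A cleaner route that avoids this bookkeeping is to cite the density of Morse functions in the strong (Whitney) $C^2$ topology, in the form of Hirsch, \emph{Differential Topology}, Thm.~6.1.2. Choose a Morse $\tilde\mu$ with $|\tilde\mu-h|<1$ everywhere. Then $\tilde\mu^{-1}([-\infty,N])\subseteq h^{-1}([0,N+1])$, which is compact, so $\tilde\mu$ is proper. Setting $\mu:=\tilde\mu+1$ gives $\mu\ge h\ge0$, so $\mu$ maps into $[0,\infty)$. I expect this step, keeping properness while perturbing to Morse, to be the only real obstacle, and the strong-topology density theorem is the way to handle it.

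For the minimal critical value, take $c_0:=\inf\mu\ge0$ and any $t>c_0$. The set $\mu^{-1}([c_0,t])$ is compact and nonempty, so $\mu$ attains its infimum at some point $p$. A global minimum of a function on a manifold without boundary is a critical point, so $c_0=\mu(p)$ is a critical value. Every critical value is $\ge c_0$, so $c_0$ is the minimal one.
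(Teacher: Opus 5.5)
Your proof is correct, but it takes a different route from the paper's. The paper uses a closed embedding $M\hookrightarrow\mathbb{R}^N$ and the squared distance $\mu(x)=|x-p|^2$ from a generic point $p$. This is Morse for almost every $p$ by Milnor's Theorem~6.6, which is a single Sard argument. Properness and nonnegativity come for free, since $\mu^{-1}[0,c]=M\cap\overline{B}(p,\sqrt c)$ is closed and bounded.

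You instead build a proper exhaustion $h=\sum_k k\phi_k$ first. You then use density of Morse functions in the strong topology to get a Morse $\tilde\mu$ with $|\tilde\mu-h|<1$, which keeps properness. This is heavier machinery than the lemma needs. Its advantage is flexibility: it shows that every proper smooth function can be $C^0$-approximated by a proper Morse function. That is exactly the device the paper uses later in Case (C). There it needs a proper Morse function unbounded above and below, which the squared-distance construction cannot produce. Your first sketch, with linear perturbations, asserts without justification that one can arrange $|x|\le h(x)+1$. This is harmless, because your second route is complete on its own. The notation $[-\infty,N]$ should read $(-\infty,N]$.

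For the minimal critical value, the paper argues that critical values form a discrete closed subset of $[0,\infty)$, because Morse critical points are isolated and properness gives finitely many in each sublevel set. It then notes this set is non-empty. Your argument is more direct: the global minimum value is itself a critical value and is trivially the least one. So you need only properness and boundedness below, not the Morse condition or discreteness.
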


\begin{proof} We will simply observe that the standard Morse function on an arbitrary manifold is already of this type.
Embed $M\hookrightarrow\mathbb{R}^{N}$ as a closed submanifold and put $\mu(x)=|x-p|^{2}$. By \cite[Thm.~6.6]{Milnor1963} this is Morse for almost every $p$ and it is proper because $\mu^{-1}[0,c]=M\cap\overline{B}(p,\sqrt{c})$ is closed and bounded. Properness gives the second assertion: critical points are isolated and each compact sublevel set contains finitely many, so the critical values form a discrete closed subset of $[0,\infty)$ and have a least element. The set is non-empty because a proper function bounded below attains a global minimum.
\end{proof}

\begin{lem}[compact core]\label{lem:core}
Let $M$ be a connected open manifold and $C\subset M$ compact. Then there is a compact codimension-zero submanifold $K\subset M$ with smooth non-empty boundary such that $C\subset\intr(K)$ and every component of $M\setminus\intr(K)$ is noncompact.
\end{lem}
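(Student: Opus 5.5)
We would use the proper Morse function of Lemma~\ref{lem:propermorse} and take $K$ to be a regular sublevel set of it, enlarged by the bounded components of its complement. Concretely, let $\mu:M\to[0,\infty)$ be a proper Morse function. Since $C$ is compact, $\mu$ is bounded on $C$. Since the critical values of $\mu$ form a discrete closed set, we can pick a regular value $c>\max_C\mu$. Then $K_0:=\mu^{-1}([0,c])$ is a compact codimension-zero submanifold with smooth boundary $\mu^{-1}(c)$, and $C\subset\mu^{-1}([0,c))=\intr(K_0)$.

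Next we remove the compact complementary pieces. The closed set $M\setminus\intr(K_0)=\mu^{-1}([c,\infty))$ is itself a manifold with boundary $\mu^{-1}(c)$. Its boundary is compact, so it has only finitely many components meeting the boundary. Every component does meet the boundary: a component disjoint from $\mu^{-1}(c)$ would be open and closed in $M$, hence all of $M$ by connectedness, which is impossible because it misses $C\neq\emptyset$. (We may enlarge $C$ to make it nonempty; if $C=\emptyset$, we first replace it by a point.) Hence there are finitely many components $Y_1,\dots,Y_k$, each a manifold with boundary $\partial Y_i\subseteq\mu^{-1}(c)$.

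Let $K:=K_0\cup\bigcup\{Y_i: Y_i\text{ compact}\}$. This is a finite union of compact sets, so it is compact. It is a codimension-zero submanifold: along a boundary component of $K_0$ that bounds a compact $Y_i$, the two sides glue smoothly, because near $\mu^{-1}(c)$ the set $K_0\cup Y_i$ is just a neighbourhood of a regular level set. The boundary of $K$ is the union of $\partial Y_j$ over the noncompact $Y_j$, and its complement $M\setminus\intr(K)$ is the union of the noncompact $Y_j$. So every component of $M\setminus\intr(K)$ is noncompact. The interior of $K$ contains $\intr(K_0)\supset C$.

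The main point to check is that $\partial K\neq\emptyset$. If every $Y_i$ were compact, then $K=M$ would be compact, contradicting the assumption that $M$ is open. So at least one $Y_j$ is noncompact, and its nonempty boundary lies in $\partial K$. A minor care point is that a single boundary component of $K_0$ lies in exactly one $Y_i$, so the gluing is well defined. This holds because the components of $\mu^{-1}(c)$ are disjoint and each has a collar on the $\mu\ge c$ side lying in one component.
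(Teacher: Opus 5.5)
Your proof is correct and is essentially the paper's argument: take a regular sublevel set $\mu^{-1}([0,c])$ of a proper Morse function with $c>\max_C\mu$, then absorb the finitely many compact components of $\mu^{-1}([c,\infty))$. You also spell out details the paper leaves implicit: that every component of the complement meets $\mu^{-1}(c)$, the degenerate case $C=\emptyset$, and why the enlarged set still has smooth non-empty boundary. All of these check out.
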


\begin{proof}
Figure \ref{fig:core} illustrates the idea. Take $\mu$ as in Lemma~\ref{lem:propermorse} with least critical value $v_{1}$, choose a regular value $h>\sup_{C}\mu$ and set $K_{0}:=\mu^{-1}[v_{1},h]$, a compact codimension-zero submanifold with smooth boundary $\mu^{-1}(h)$ containing $C$ in its interior. We have $\partial K_{0}\neq\emptyset$ since $M$ is connected and noncompact. As $\partial K_{0}$ is compact, it has finitely many components, and every component of $M\setminus\intr(K_{0})$ has boundary a non-empty union of them. Thus $M\setminus\intr(K_{0})$ has finitely many components. Let $K$ be the union of $K_{0}$ with the compact ones. Then $K$ is compact with smooth boundary, and $M\setminus\intr(K)$ is the union of the noncompact ones, which is non-empty because $M$ is not compact.
\end{proof}

\begin{rem}\label{rem:noremorse}
	It is really only the properness of $\mu$ that Lemma \ref{lem:core} uses and not that it is Morse.
\end{rem}

\subsection{$M_{R}$ with compact closure}\label{subsec:cMR compact}

\begin{thmstar}[Case A]
\label{thm:A}
Let $(M,g)$ be a connected open Riemannian manifold of dimension $n\geq2$. Then there exists an admissible collar gluing $M=\cMR\cup_{\HH}\cML$ with $\cMR$ compact, with $\HH$ compact, and with every component of $\cML$ noncompact.
\end{thmstar}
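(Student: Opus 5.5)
The plan is to build the collar-gluing directly from the compact core of Lemma~\ref{lem:core} and then check admissibility with Proposition~\ref{prop:euler}. Apply Lemma~\ref{lem:core} with $C$ a single point of $M$; this is allowed because $M$ is connected and open. We obtain a compact codimension-zero submanifold $K\subset M$ with smooth non-empty boundary, such that every component of $M\setminus\intr(K)$ is noncompact. Set $\cMR:=K$, $\HH:=\partial K$, and $\cML:=M\setminus\intr(K)$, so that $\Rreg=\intr(K)$ and $\Lreg=M\setminus K$.

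We then verify that this is a collar-gluing in the sense of Definition~\ref{def:collargluing}.
\begin{itemize}
\item $\HH$ is a compact, hence closed, embedded hypersurface, and it is non-empty.
\item $\Rreg$ is non-empty, since it contains $C$.
\item $\Lreg$ is non-empty, since $M$ is noncompact while $K$ is compact.
\item The closures are the right ones. Every point of $\partial K$ has a boundary chart in which both $\intr(K)$ and $M\setminus K$ accumulate, so $\overline{\Rreg}=K$ and $\overline{\Lreg}=M\setminus\intr(K)$.
\end{itemize}
This last point is exactly what the Möbius band of Remark~\ref{rem:twosided} shows can fail in general. Here it holds because $K$ is a codimension-zero submanifold with boundary, so both sides of each boundary component are locally present.

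For the compactness types: $\cMR=K$ and $\HH$ are compact by construction. Every component of $\cML=M\setminus\intr(K)$ is noncompact by Lemma~\ref{lem:core}; note that the components of $\cML$ and of $M\setminus\intr(K)$ coincide.

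Admissibility now follows from Proposition~\ref{prop:euler}. Condition (4) there imposes a constraint only on the compact components of $\cML$, and there are none, so (4) holds vacuously. Equivalently, by Remark~\ref{rem:euler}, $H^{n}(\cML,\HH;\mathbb{Z}_{w})\cong H^{\mathrm{lf}}_{0}(\cML)=0$, so the relative Euler class vanishes for any choice of transverse boundary field $\nu$. One may take $\nu$ uniformly pointing into $\Lreg$, which gives a big-bang transition. Proposition~\ref{prop:euler} then yields $f$ and $V$, and Lemma~\ref{lem:master} shows that $\gt$ has the required properties.

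The only delicate point is the closure condition on $\cMR$ and $\cML$. If one wants an explicit realization matching Figure~\ref{fig:master}, take $K=\mu^{-1}[v_1,h]$ together with the compact components of its complement, and define $f=\rho\circ\mu$ on a collar of $\HH$. However, Lemma~\ref{lem:defining} already supplies a suitable $f$, so this explicit construction is optional.
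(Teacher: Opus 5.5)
Your proof is correct and is essentially the paper's own argument. The paper reruns the proof of Lemma~\ref{lem:core} inline (proper Morse function, the sublevel set $\mu^{-1}[v_1,h]$, plus the compact components of $\mu^{-1}[h,\infty)$), then uses the vanishing of the relative Euler class on the noncompact $\cML$ together with Lemma~\ref{lem:defining}, Proposition~\ref{prop:euler} and Lemma~\ref{lem:master}, just as you do; your explicit check that $\overline{\Rreg}=K$ and $\overline{\Lreg}=M\setminus\intr(K)$ is a detail the paper leaves implicit.
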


At a formal level, the proof is a combination of
Lemma~\ref{lem:core}, Remark~\ref{rem:euler}, and
Proposition~\ref{prop:euler}. For completeness, however, we collect the
relevant ingredients once more. The proofs of the remaining cases will
be given in less detail.

\begin{proof}
Let $\mu\colon M\to[0,\infty)$ be a proper Morse function and let $v_1=\min_M\mu$ be its global minimum (Lemma \ref{lem:propermorse}). Every sublevel set $\mu^{-1}([v_1,h])\subset M$ is compact for all \(h\geq v_1\) due to properness. It is really only the properness of $\mu$ that we will use.

Let $h>v_1$ be a regular value of $\mu$ and take 
\[
	K_0=\mu^{-1}[v_1,h].
\] 
Now, the complement $M\setminus \operatorname{int} K_0 = \mu^{-1}[h,\infty)$ may have compact connected components. As in the proof of Lemma \ref{lem:core}, we define $\cMR$ to be the union of $K_0$ and all compact connected components of $\mu^{-1}[h,\infty)$: see Figure \ref{fig:core}. Since $\partial K_0=\mu^{-1}\{h\}$ is compact by properness it has finitely many components, whence $\mu^{-1}[h,\infty)$, having the same boundary (and $M$ being connected), has finitely many components. Therefore $\cMR$ is compact. We write $\HH=\partial\cMR$ and $M_L=M\setminus\cMR$, so $\cML=M_L\cup \HH$. All components of $\cML$ are noncompact by construction.

Figure \ref{fig:humps} illustrates how we cannot in general get around adding these components to $K_0$ by hand by choosing $h$ high enough.

As for admissibility, we are essentially in the situation illustrated in
Figure~\ref{fig:master}. We take $f$ from Lemma \ref{lem:defining} for this collar-gluing and take $\nu=\nabla f$ near $\HH$, where $f\equiv 1+\theta$, so $\nu$ is transverse to it (see the proof of Lemma \ref{lem:defining}).  
Because $\cML$ is noncompact, the relative Euler class of $\cML$ with respect to $\nu$ vanishes by Remark \ref{rem:euler}. Hence $\nu$ extends to a nowhere-vanishing vector field $V$ on all of $\cML$.  As in the proof
of Proposition~\ref{prop:euler}, $V$ may also be extended into $M_R$. Finally, Lemma \ref{lem:master} yields admissibility for $\tilde{g}=g-fV^\flat\otimes V^\flat$.
\end{proof}

This is the configuration of the no-boundary proposal, and it is available on every open manifold without further conditions. Alternatively, by Lemma~\ref{lem:defining}, one may take any proper $f\in C^{\infty}(M)$ instead of a proper Morse function, as the latter one is a convenience, not a necessity.

\begin{figure}[htbp]
  \centering
  \begin{tikzpicture}[line join=round,line cap=round,scale=0.78]
    \def\hlev{7.5}
    \def\bodypath{(4.7,0.75)
      .. controls (3.0,0.75)  and (1.95,1.35) .. (1.95,2.3)
      .. controls (1.95,2.8)  and (1.95,3.4)  .. (2.0,4.0)
      .. controls (2.05,4.6)  and (2.25,5.1)  .. (2.4,5.7)
      .. controls (2.42,6.2)  and (2.4,6.6)   .. (2.4,7.0)
      .. controls (2.4,7.6)   and (2.4,8.1)   .. (2.45,8.45)
      .. controls (2.5,8.95)  and (2.85,9.15) .. (3.25,9.15)
      .. controls (3.65,9.15) and (4.05,8.95) .. (4.1,8.45)
      .. controls (4.12,8.0)  and (4.1,7.5)   .. (4.1,7.0)
      .. controls (4.1,6.6)   and (4.45,6.25) .. (4.95,5.95)
      .. controls (5.4,6.25)  and (5.6,6.6)   .. (5.6,7.0)
      .. controls (5.6,7.6)   and (5.6,8.2)   .. (5.6,8.8)
      .. controls (5.6,9.6)   and (5.6,10.2)  .. (5.6,11.2)
      -- (7.3,11.2)
      .. controls (7.3,10.2)  and (7.3,9.4)   .. (7.3,8.6)
      .. controls (7.3,7.8)   and (7.3,7.3)   .. (7.3,6.9)
      .. controls (7.3,6.5)   and (7.55,6.2)  .. (7.85,5.7)
      .. controls (8.05,5.2)  and (8.0,4.5)   .. (7.95,3.9)
      .. controls (7.95,3.3)  and (7.95,2.6)  .. (7.9,2.2)
      .. controls (7.8,1.3)   and (6.4,0.75)  .. (4.7,0.75) -- cycle}
    \def\holepath{(4.75,3.05) ellipse (1.3 and 0.85)}
    \def\cpath{(3.05,1.85) -- (3.3,2.7) -- (2.9,3.4) -- (3.7,4.15) -- (4.75,3.9)
               -- (5.7,4.45) -- (6.35,3.5) -- (7.05,4.35)}

    \begin{scope}
      \clip (0,0.2) rectangle (11.2,9.9);
      \begin{scope}[even odd rule]
        \clip \bodypath \holepath;
        \fill[lor]  (0,0.2) rectangle (11.2,9.9);
        \fill[riem] (0,0.2) rectangle (11.2,\hlev);
        \fill[riem] (0,\hlev) rectangle (4.85,9.9);
        \draw[black!35,line width=0.5pt] (4.9,1.55) ellipse (2.8 and 0.3);
        \draw[black!35,line width=0.5pt] (2.7,3.05) ellipse (0.75 and 0.2);
        \draw[black!35,line width=0.5pt] (7.0,3.05) ellipse (0.95 and 0.2);
        \draw[black!55,densely dashed,line width=0.7pt] (3.25,\hlev) ellipse (0.85 and 0.22);
        \draw[line width=1.2pt] (6.45,\hlev) ellipse (0.85 and 0.24);
      \end{scope}
      \begin{scope}
        \clip (0,0.2) rectangle (11.2,9.4);
        \draw[line width=0.9pt] \bodypath;
      \end{scope}
      \begin{scope}
        \clip (0,9.4) rectangle (11.2,9.9);
        \draw[black!50,densely dotted,line width=0.9pt] \bodypath;
      \end{scope}
      \draw[line width=0.9pt] \holepath;
      \draw[line width=1.3pt] \cpath;
      \fill (4.7,0.75) circle (2.2pt);
      \fill (4.75,2.2) circle (2.2pt);
      \fill (4.75,3.9) circle (2.2pt);
      \fill (4.95,5.95) circle (2.2pt);
      \fill (3.25,9.15) circle (2.2pt);
    \end{scope}
    \draw[black!55,-{Stealth[length=5pt]},line width=0.8pt] (6.45,9.95) -- (6.45,10.6);
    \draw[black!60,-{Stealth[length=5pt]},line width=0.8pt] (0.75,0.5) -- (0.75,8.3);
    \node[black!60] at (0.75,8.65) {$\mu$};
    \draw[black!60,line width=0.8pt] (0.55,\hlev) -- (0.95,\hlev);
    \node at (0.28,\hlev) {$h$};
    \draw[black!45,dashed,line width=0.7pt] (0.95,\hlev) -- (9.2,\hlev);
    \draw[black!45,densely dotted,line width=0.7pt] (0.75,4.45) -- (8.5,4.45);
    \node[black!60,anchor=west] at (8.6,4.45) {$\sup_{C}\mu$};
    \node at (3.6,5.25) {$\cMR=K$};
    \node at (6.45,9.0) {$\cML$};
    \node[anchor=west] at (8.6,7.75) {$\HH$};
    \draw[black!70,-{Stealth[length=4pt]},line width=0.6pt] (8.55,7.7) -- (7.4,7.55);
    \node at (2.6,2.6) {$C$};
    \node at (1.3,3.6) {$M$};
    \draw[black!70,-{Stealth[length=4pt]},line width=0.6pt] (1.55,3.52) -- (2.0,3.35);
    \node[anchor=east] at (2.5,9.55) {\small compact};
    \draw[black!70,-{Stealth[length=4pt]},line width=0.6pt] (2.6,9.4) -- (2.78,8.6);
  \end{tikzpicture}
  \caption{The construction behind Case (A), with $\mu$ the proper Morse function of Lemma~\ref{lem:propermorse} whose critical points are the filled dots. A regular value $h>\sup_{C}\mu$ splits $M$, as in Lemma~\ref{lem:core}, into the compact sublevel set $K_{0}=\mu^{-1}([v_{1},h])$ and the two components of $\mu^{-1}([h,\infty))$: the cap over the dashed circle, which is compact and which we absorb into $K$ by hand (as in the proof of Lemma~\ref{lem:core}), and the noncompact tube. Hence $\HH=\partial K$ is a single circle and $\cML$ is the tube alone. The potential instanton $\cMR=K$ is compact (and can contain a prescribed compact set $C$). It also contains every critical point of $\mu$, so that the Lorentzian era carries no topology change (see Corollary \ref{cor:selection}).}
  \label{fig:core}
\end{figure}

\begin{figure}[htbp]
  \centering
  \begin{tikzpicture}[line join=round,line cap=round,scale=0.78]
    \def\hlev{2.7}
    \def\hhlev{4.85}
    \def\comb{
      \draw[combstyle] (5.9,1.2) -- (5.9,9.6);
      \draw[combstyle] (5.9,2.0) .. controls (4.9,2.2) and (4.0,2.8) .. (3.8,4.2);
      \draw[combstyle] (5.9,3.5) .. controls (6.9,3.7) and (7.8,4.3) .. (8.0,5.7);
      \draw[combstyle] (5.9,5.0) .. controls (4.9,5.2) and (4.0,5.8) .. (3.8,7.2);
      \draw[combstyle] (5.9,6.5) .. controls (6.9,6.7) and (7.8,7.3) .. (8.0,8.7);
      \draw[combstyle] (5.9,8.0) .. controls (4.9,8.2) and (4.0,8.8) .. (3.8,10.2);}

    \begin{scope}
      \clip (0,0.2) rectangle (10.6,9.4);
      \tikzset{combstyle/.style={line width=0.62cm,black}}\comb
      \tikzset{combstyle/.style={line width=0.53cm,lor}}\comb
      \begin{scope}
        \clip (0,0.2) rectangle (10.6,\hlev);
        \tikzset{combstyle/.style={line width=0.53cm,riem}}\comb
      \end{scope}
      \begin{scope}
        \clip (0,\hlev) rectangle (5.35,4.9);
        \tikzset{combstyle/.style={line width=0.53cm,riem}}\comb
      \end{scope}
      \fill (5.9,0.82) circle (2.2pt);
      \foreach \p in {(3.8,4.6),(8.0,6.1),(3.8,7.6),(8.0,9.1)} \fill \p circle (2.2pt);
    \end{scope}
    \draw[black!55,-{Stealth[length=5pt]},line width=0.8pt] (5.9,9.55) -- (5.9,10.2);
    \draw[black!60,-{Stealth[length=5pt]},line width=0.8pt] (0.5,0.6) -- (0.5,7.6);
    \node[black!60] at (0.5,7.95) {$\mu$};
    \draw[black!60,line width=0.8pt] (0.3,\hlev) -- (0.7,\hlev);
    \node at (0.05,\hlev) {$h$};
    \draw[black!45,dashed,line width=0.7pt] (0.7,\hlev) -- (9.6,\hlev);
    \draw[black!60,line width=0.8pt] (0.3,\hhlev) -- (0.7,\hhlev);
    \node at (0.0,\hhlev) {$h'$};
    \draw[black!35,densely dashed,line width=0.7pt] (0.7,\hhlev) -- (9.6,\hhlev);
    \node[anchor=east] at (3.35,3.55) {\small compact};
    \draw[black!70,-{Stealth[length=4pt]},line width=0.6pt] (3.4,3.6) -- (3.95,3.95);
  \end{tikzpicture}
  \caption{Why the passage from $K_{0}$ to $K$ in Lemma~\ref{lem:core} cannot be
replaced by the choice of a higher level. 
}
  \label{fig:humps}
\end{figure}

We now consider the compact--compact case, in which both closed sides $\cMR$, $\cML$ of the signature-changing hypersurface are required to be compact.

\begin{corstar}[Case B]
\label{cor:B}
Let $(M,g)$ be a connected Riemannian manifold of dimension $n\geq2$. A collar gluing $M=\cMR\cup_{\HH}\cML$ has both sides compact if and only if $M$ is compact. If $M$ is compact, the collar-gluing is admissible if and only if the boundary of $\cML$ admits
a decomposition $\partial\cML=\partial_{+}\cML\sqcup\partial_{-}\cML$ into unions of connected components of $\HH$ such that $\chi(\cML)=\chi(\partial_{-}\cML)$. If $n$ is even, this condition reduces to $\chi(\cML)=0$.
\end{corstar}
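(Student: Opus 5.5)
The statement has two parts: a purely topological characterization of when both sides are compact, and an admissibility criterion. I will prove them separately.

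\emph{Compactness.} Suppose $\cMR$ and $\cML$ are both compact. Then $M=\cMR\cup\cML$ is a union of two compact subsets, hence compact.

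Conversely, suppose $M$ is compact. By Definition~\ref{def:collargluing}, $\cMR$ and $\cML$ are closures of subsets of $M$, so they are closed in $M$. Closed subsets of a compact space are compact. The boundary $\HH=\cMR\cap\cML$ is then automatically compact as well. No Morse theory is needed for this direction.

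\emph{Admissibility.} Assume $M$ is compact, so every connected component $X$ of $\cML$ is compact. I will invoke Proposition~\ref{prop:euler}: the collar-gluing is admissible if and only if each such $X$ admits a splitting of $\partial X$ into unions of components with $\chi(X)=\chi(\partial_-X)$.

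The statement is phrased globally, for $\cML$ as a whole. To match it, I will check that choosing a decomposition $\partial\cML=\partial_+\cML\sqcup\partial_-\cML$ is the same as choosing such a decomposition on each component $X$. This holds because every component of $\HH$ lies in the boundary of exactly one component of $\cML$. Euler characteristics are additive over disjoint unions, so the componentwise conditions sum to the global equation $\chi(\cML)=\chi(\partial_-\cML)$.

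\emph{The gap between global and componentwise conditions.} The converse direction needs care, because the global equation is a sum and does not by itself force each component to satisfy its own equation. This is the main subtlety. Two ways to handle it:
\begin{itemize}
\item Read the condition, as in Proposition~\ref{prop:euler}(4), as a condition on each compact component. I plan to note this explicitly.
\item Argue directly with the relative Euler class. The group $H^n(\cML,\HH;\mathbb{Z}_w)$ is a direct sum of copies of $\mathbb{Z}$, one per component. The vanishing in Proposition~\ref{prop:euler}(3) is therefore componentwise, and by \eqref{eq:morse} each summand equals $\chi(X)-\chi(\partial_-X)$.
\end{itemize}
So the criterion is to be understood componentwise. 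For connected $\cML$ the two readings coincide exactly.

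\emph{Even dimension.} For $n$ even, $\HH$ is a closed manifold of odd dimension $n-1$. Every component of $\HH$ therefore has $\chi=0$ by Poincar\'e duality, so $\chi(\partial_-\cML)=0$ for every choice of decomposition. The condition reduces to $\chi(\cML)=0$, again applied componentwise. This is the final clause of Proposition~\ref{prop:euler}.
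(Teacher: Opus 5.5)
Your proof is correct and is essentially the paper's own argument: both sides are compact because $\cMR$ and $\cML$ are closed in $M$ with union $M$, and the admissibility criterion is Proposition~\ref{prop:euler} applied to the finitely many compact components of $\cML$. You are right to read the Euler condition componentwise, which is how the paper itself uses Case (B) in the proof of Corollary~\ref{cor:MV}; read literally for $\cML$ as a whole, the sufficiency direction and the even-dimensional reduction to $\chi(\cML)=0$ would fail for disconnected $\cML$, e.g.\ a disc together with a disjoint pair of pants in $S^{2}$.
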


\begin{proof}
The equivalence statement regarding compactness types is clear since $\cMR$ and $\cML$ are closed. The criterion is Proposition~\ref{prop:euler}.
\end{proof}

The following is an elementary computation using Mayer--Vietoris and shows how the Riemannian region is affected by the Euler characteristic condition. We provide a proof for completeness.

\begin{cor}\label{cor:MV}
Let $M=\cMR\cup_{\HH}\cML$ be a collar-gluing of a closed manifold $M$. Then
\[
  \chi(\cMR)+\chi(\cML)=\chi(M)+\chi(\HH).
\]
If $n$ is even, then $\chi(\HH)=0$; and if further the collar-gluing is admissible, then $\chi(\cMR)=\chi(M)$. If $n$ is odd, then $\chi(M)=0$ and $\chi(\cMR)=\chi(\cML)=\tfrac12\chi(\HH)$.
\end{cor}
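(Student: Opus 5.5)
The plan is to apply Mayer--Vietoris to an open cover of $M$ that thickens the two closed sides. Using the bicollar $c\colon\HH\times\mathbb{R}\to\mathcal U$ of Remark~\ref{rem:twosided}, set
\[
A:=\Rreg\cup c(\HH\times(-1,1)),\qquad B:=\Lreg\cup c(\HH\times(-1,1)).
\]
Then $A$ deformation retracts onto $\cMR$, $B$ onto $\cML$, $A\cup B=M$, and $A\cap B=c(\HH\times(-1,1))\simeq\HH$.

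Since $M$ is closed, $\cMR$, $\cML$ and $\HH$ are compact manifolds (with boundary in the first two cases). All homology groups involved are therefore finitely generated, and all Euler characteristics are defined. The long exact Mayer--Vietoris sequence with field coefficients (say $\mathbb{Q}$) is a finite exact sequence, so the alternating sum of dimensions vanishes. This gives $\chi(A\cap B)-\chi(A)-\chi(B)+\chi(M)=0$, which is the stated identity.

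For $n$ even, $\HH$ is a closed manifold of odd dimension $n-1$, so $\chi(\HH)=0$ by Poincar\'e duality with $\mathbb{Z}_2$ coefficients (no orientability is needed). The identity then reads $\chi(\cMR)+\chi(\cML)=\chi(M)$. If the collar-gluing is admissible, Proposition~\ref{prop:euler} (in the even-dimensional form, cf.\ Case~B) gives $\chi(\cML)=0$, since $\cML$ is compact. Hence $\chi(\cMR)=\chi(M)$.

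For $n$ odd, $M$ is a closed odd-dimensional manifold, so $\chi(M)=0$. Each of $\cMR$ and $\cML$ is a compact $n$-manifold with boundary $\HH$. The relation $\chi(\partial X)=(1+(-1)^{n-1})\chi(X)$ recalled in Remark~\ref{rem:euler} (from \cite{tomDieck}) gives $\chi(\HH)=2\chi(\cMR)=2\chi(\cML)$. This is consistent with the Mayer--Vietoris identity.

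The only step requiring care is the homotopy equivalences $A\simeq\cMR$ and $A\cap B\simeq\HH$. Both follow by retracting along the collar coordinate $\theta$ in $c(\HH\times[0,1))$, and by compactness of $M$ everything else is routine.
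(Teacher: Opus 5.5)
Your proposal is correct and is essentially the paper's proof: your cover $A,B$ coincides with the paper's $U_R,U_L$ (both are $\Rreg\cup c(\HH\times[0,1))$ and its mirror image), and your treatment of the even-dimensional case is identical to the paper's. The only variation is in odd dimensions, where you quote $\chi(\partial X)=(1+(-1)^{n-1})\chi(X)$ from Remark~\ref{rem:euler}, whereas the paper rederives it by running the same Mayer--Vietoris argument on the doubles $D\cML$ and $D\cMR$; that doubling argument is how the relation is usually proved anyway.
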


\begin{proof}
The spaces $M$, $\cMR$, $\cML$ and $\HH$ are compact manifolds, possibly with boundary, and $\chi$ is the alternating sum of the Betti numbers. Let $c$ be the tubular neighbourhood of Remark~\ref{rem:twosided}, and set
\[
  U_{R}:=\Rreg\cup c\bigl(\HH\times(-\infty,1)\bigr),\qquad U_{L}:=\Lreg\cup c\bigl(\HH\times(-1,\infty)\bigr).
\]
These are open, cover $M$, and meet in $U_{R}\cap U_{L}=c(\HH\times(-1,1))$. The homotopy which scales the collar coordinate of the points of $c(\HH\times[0,1))$ to $0$ and fixes $\cMR$ is continuous, because $\cMR$ and $c(\HH\times[0,1))$ are closed in $U_{R}$ (the latter since $c(\HH\times[0,1])$ is compact). It is a deformation retraction of $U_{R}$ onto $\cMR$. In the same way $U_{L}$ deformation retracts onto $\cML$ and $U_{R}\cap U_{L}$ onto $\HH$. The Mayer--Vietoris sequence (with field coefficients) of the cover $\{U_{R},U_{L}\}$, which thus applies, is the exact sequence
\[
  \cdots\to H_{k}(\HH)\to H_{k}(\cMR)\oplus H_{k}(\cML)\to H_{k}(M)\to H_{k-1}(\HH)\to\cdots,
\]
vanishing in degrees above $n$. The alternating sum of the dimensions in a finite exact sequence is zero, which gives 
\[
\chi(\HH)-\chi(\cMR)-\chi(\cML)+\chi(M)=0.
\]

As mentioned in Remark \ref{rem:euler}, a closed manifold of odd dimension has $\chi=0$ by Poincar\'e duality ($b_k=b_{\dim-k}$). For $n$ even this $\chi(\HH)=0$. If the gluing is moreover admissible, then Case (B) gives $\chi(X)=0$ for each of the finitely many components $X$ of $\cML$, so $\chi(\cML)=0$ and the identity reads $\chi(\cMR)=\chi(M)$. For $n$ odd it gives $\chi(M)=0$. It also applies to the double $D\cML=\cML\cup_{\HH}\cML$, which is a closed $n$-manifold, smooth by means of the collar. Since $\partial\cML=\HH$, the argument above applied to $D\cML$ gives $0=2\chi(\cML)-\chi(\HH)$, and likewise $0=2\chi(\cMR)-\chi(\HH)$.
\end{proof}

For $n$ odd the identity therefore gives no information about the potential instanton beyond $\chi(\cMR)=\chi(\cML)$. For $n$ even the condition $\chi(\cMR)=\chi(M)$ is necessary for admissibility, and it is sufficient when $\cML$ is connected, but not sufficient in general since it only sees the sum $\sum_{X}\chi(X)$ over the components of $\cML$. 

To illustrate: on $M=S^{2}$ it forces $\chi(\cMR)=2$, which is impossible for a connected $\cMR$, since a compact connected surface with non-empty boundary has $\chi\leq1$. An admissible compact--compact gluing of $S^{2}$ must therefore have at least two potential instantons. Two are realized in Example \ref{ex:caps}. For three, let $\cMR$ be a pair of pants together with three discs, and $\cML$ the three cylinders joining each disc to a cuff of the pants. Then $\chi(\cMR)=-1+3=2$, and every component of $\cML$ has $\chi=0$.

\subsection{\(M_R\) with noncompact closure}\label{A227JE6}
The noncompact--noncompact case is governed both by the ends of \(M\) and by whether the transition hypersurface is compact or noncompact; see Figure \ref{fig:compact-noncompact-H}. 

Recall \cite{HughesRanicki, Porter1995} that the \emph{ends} of $M$ are the elements of $\varprojlim_{i}\pi_{0}^{\mathrm{nc}}(M\setminus K_{i})$, the inverse limit over the components with noncompact closure, taken along an exhaustion $K_{1}\subset\intr(K_{2})\subset\cdots$ by compact codimension-zero submanifolds with smooth boundary; each $M\setminus K_{i}$ then has finitely many components, since $\partial K_{i}$ does. This is a purely topological notion which formalises the idea of counting the `directions at infinity' in $M$ by considering compact exhaustions. The simplest examples are as follows: 
\begin{itemize}
	\item $\mathbb{R}$ has two ends.
	\item $S^{n-1}\times\mathbb{R}$, $n\geq2$, likewise has two ends.
	\item $\mathbb{R}^{n}$ has one for $n\geq2$.
\end{itemize}

\begin{figure}[htbp]
    \centering
    \begin{tikzpicture}[line join=round,line cap=round,x=0.63cm,y=0.63cm]
  \colorlet{lev}{blue!75!black}

  \begin{scope}
    \coordinate (O) at (5,5);
    \foreach \y in {2.4,3.7,6.3,7.6}{
      \draw[lev,line width=0.9pt] (1.5,\y)--(8.5,\y);
      \foreach \x in {1.05,1.25,8.75,8.95}
        \fill[lev] (\x,\y) circle (0.035);
    }
    \draw[lev,line width=1.0pt] (1.5,5)--(8.5,5);
    \draw[-{Stealth[length=4.5pt]},line width=0.85pt] (1.1,5)--(9.0,5);
    \draw[-{Stealth[length=4.5pt]},line width=0.85pt] (5,1.05)--(5,9.0);
    \node[below left] at (O) {$0$};
    \node[anchor=west] at (9.08,5) {$x$};
    \node[anchor=south] at (5,9.12) {$t$};
    \node at (3,9.65) {$\mathbb{R}^{2}$};
    \node at (2.05,8.25) {$M_L$};
    \node at (2.05,1.55) {$M_R$};
    \node[lev,anchor=east] at (1.02,5) {$\mathcal H$};
    \node[green!60!black,font=\Large] at (5,0.15) {$\checkmark$};
    \node[align=center] at (5,-0.85)
      {\small noncompact $\mathcal H$, noncompact $M_R$,\\[-1pt]
       \small noncompact $M_L$};
  \end{scope}

  \node[font=\large] at (10.55,5.0) {vs};

  \begin{scope}[shift={(12.0,0)}]
    \coordinate (C) at (5,5);
    \draw[-{Stealth[length=4.5pt]},line width=0.85pt] (1.15,5)--(9.35,5);
    \draw[-{Stealth[length=4.5pt]},line width=0.85pt] (5,1.05)--(5,9.0);
    \foreach \r in {1.15,2.05,2.9,3.75}
      \draw[lev,line width=0.95pt] (C) circle (\r);
    \node[below left] at (C) {$0$};
    \node[anchor=west] at (9.45,5) {$x$};
    \node[anchor=south] at (5,9.12) {$t$};
    \node at (3,9.65) {$\mathbb{R}^{2}$};
    \node at (5.55,4.67) {$M_R$};
    \node at (8.55,8.25) {$M_L$};
    \node[lev,anchor=east] at (2.0,2.0) {$\mathcal H$};
    \node[red!80!black,font=\Huge] at (5,0.15) {$\times$};
    \node at (5,-0.85) {\small compact $\mathcal H$, compact $M_R$};
  \end{scope}
\end{tikzpicture}
    \caption{$\mathbb{R}^n$ for $n\geq2$ does not admit an admissible collar-gluing with noncompact $\cMR$ and $\cML$ but compact $\HH$ since it has two ends. 
    The affine hyperplane splitting of \(\mathbb R^n\) belongs to a different subcase as $\HH$ is then noncompact and arises as the level set of a nonproper function, for example a coordinate projection.}
    \label{fig:compact-noncompact-H}
\end{figure}

Thus the noncompact--noncompact case naturally splits into two subcases. When the transition hypersurface \(\mathcal H\) is compact, the relevant
topological condition is that \(M\) have at least two ends. In this case,
the construction can be carried out using a proper Morse function that is
unbounded both above and below. If \(\mathcal H\) is
noncompact, properness must be abandoned in the construction since level sets of proper functions are compact.


\begin{thmstar}[Case C]
\label{thm:C}
Let $(M,g)$ be a connected open Riemannian manifold of dimension \(n\geq2\). Then $M$ admits an admissible collar-gluing with $\HH$ compact and $\cMR,\cML$ both noncompact if and only if $M$ has at least two ends.
\end{thmstar}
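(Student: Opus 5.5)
The proof splits into necessity and sufficiency. Both reduce to end theory together with Proposition~\ref{prop:euler}, and the Euler obstruction is never an issue here.

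\emph{Necessity.} Suppose $M=\cMR\cup_{\HH}\cML$ is a collar-gluing with $\HH$ compact and $\cMR,\cML$ noncompact. I would first enlarge $\HH$ to a compact codimension-zero submanifold $K$. Concretely, take $K$ to be a closed bicollar $c(\HH\times[-1,1])$, or, if cleaner, the output of Lemma~\ref{lem:core} applied to $C=c(\HH\times[-1,1])$. Every component of $M\setminus\intr(K)$ then lies entirely in $\Rreg$ or entirely in $\Lreg$, because it avoids $\HH$. Since $\cMR$ is noncompact and $\cMR\cap K$ is compact, at least one component of $M\setminus\intr(K)$ inside $\Rreg$ has noncompact closure. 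Likewise some component inside $\Lreg$ has noncompact closure. So $M\setminus K$ has at least two components with noncompact closure.

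The remaining point is that this forces two distinct ends. I would take an exhaustion $K_1=K\subset\intr(K_2)\subset\cdots$ and use that each noncompact component $U$ of $M\setminus K_1$ contains a component of $M\setminus K_i$ with noncompact closure for every $i$. This holds because $U\setminus K_i$ has finitely many components, and if all of them had compact closure then $U$ would be contained in a compact set. König's lemma, or simply the finiteness of each level, then gives an element of the inverse limit lying over $U$. Different $U$ give different ends, so $M$ has at least two ends.

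\emph{Sufficiency.} Suppose $M$ has at least two ends. Fix a compact $K$ from Lemma~\ref{lem:core}, chosen large enough that $M\setminus\intr(K)$ has at least two noncompact components. This is possible by the definition of ends: choose $K_i$ in an exhaustion far enough out that two distinct ends have different representatives. Call the components $E_1,\dots,E_m$ with $m\ge2$, and put $\HH_j=\partial E_j$. Now set $\cML:=E_1$ and $\cMR:=K\cup E_2\cup\cdots\cup E_m$, so that $\HH=\partial E_1$.

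This $\HH$ is compact, non-empty, closed and embedded, and it separates $M$ into $\Rreg$ and $\Lreg$. Both closures are noncompact: $\cML=E_1$ is noncompact by construction, and $\cMR$ contains the noncompact $E_2$. Connectedness of $M$ is not needed for $\cMR$ itself, but I would check that $\cMR$ is the closure of $\Rreg$ and that $\cML\cap\cMR=\HH$. Both follow because $E_1\cap K=\partial E_1$ and the $E_j$ are pairwise disjoint.

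For admissibility, every component of $\cML$ is noncompact; there is just one, namely $E_1$. So Remark~\ref{rem:euler} makes the relative Euler class vanish for any transverse $\nu$, and Proposition~\ref{prop:euler} (or Lemma~\ref{lem:defining} combined with Lemma~\ref{lem:master}, exactly as in the proof of Case (A)) yields $f$ and $V$. Alternatively, one could realise the gluing with a proper Morse function unbounded above and below, as the text suggests, but the core-based construction avoids constructing such a function.

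The main obstacle I expect is the end-theoretic bookkeeping in both directions. In the necessity direction, one must show that two noncompact complementary pieces at one stage give two genuinely distinct ends, which requires the nonemptiness of the inverse limit over each piece. In the sufficiency direction, one must produce a compact $K$ whose complement has two noncompact components. I would handle both through the finiteness of components of $M\setminus K_i$ noted in the definition of ends.
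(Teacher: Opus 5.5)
Your proposal is correct. The necessity direction is essentially the paper's argument: you put $\HH$ inside the interior of the first compact piece via a collar, note that each complementary component lies wholly in $\Rreg$ or wholly in $\Lreg$, and use the finiteness of components at each stage to get nonempty inverse limits over disjoint subsystems.

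The sufficiency direction takes a genuinely different route. The paper first builds a proper function $u=\sigma\rho$ that tends to $+\infty$ on one noncompact complementary piece $V_+$ and to $-\infty$ on the rest. It then approximates $u$ by a Morse function $\mu$ within distance $1$, which keeps $\mu$ proper and unbounded in both directions. Finally it takes $\cML$ to be the union of the noncompact components of a superlevel set $\mu^{-1}([h,\infty))$. You instead cut directly along $\partial E_1$, for a single noncompact component $E_1$ of $M\setminus\intr(K)$.

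Your route is shorter. It avoids the strong-topology density of Morse functions and the properness bookkeeping, and it yields a connected $\cML$ with $\HH\subset\partial K$. The paper itself remarks that Morseness is irrelevant to the statement, so nothing is lost for the theorem. What the paper's route buys is narrative: $\HH$ appears as a regular level of a global proper Morse function. That places Case (C) alongside Case (A) as the two configurations produced by proper Morse functions (unbounded in both directions versus bounded below), and it is the viewpoint used later to locate topology change.

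One step deserves a sentence that you gloss. After enlarging $K_i$ to the core $K$ of Lemma~\ref{lem:core}, you need $M\setminus\intr(K)$ to still have two noncompact components. This holds because each component of $M\setminus\intr(K)\subset M\setminus K_i$ lies in a single component of $M\setminus K_i$. Each of the two distinct noncompact-closure components of $M\setminus K_i$ therefore contains at least one component of $M\setminus\intr(K)$.

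You could also skip the core altogether: take $K=K_i$ and put any compact components into $\cMR$. Similarly, the identity $\overline{\Rreg}=\Rreg\cup\HH$ really rests on $\partial E_1\subset\partial K$ being approached from $\intr(K)$, which holds because $K$ is a codimension-zero submanifold with boundary.
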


\begin{proof}
$\Rightarrow$: Since $\HH$ is compact, $\Rreg$ and $\Lreg$ are noncompact. Choose the exhaustion so that $\HH\subset\intr(K_{1})$ (using a collar). Then for each $i$ every component of $M\setminus K_{i}$ lies wholly in $\Rreg$ or wholly in $\Lreg$. If all (of the finitely many) components contained in $\Rreg$ had compact closure then $\Rreg$ would be compact, so at least one of them has noncompact closure; likewise for $\Lreg$. The two resulting subsystems are non-empty, finite and disjoint, so each has non-empty inverse limit. Thus $M$ has at least two ends.

$\Leftarrow$: We first produce a proper Morse function $\mu:M\to\mathbb{R}$ unbounded above and below. Since $M$ has at least two ends there is a compact codimension-zero $K\subset M$ with smooth boundary such that $M\setminus K$ has at least two components with noncompact closure. Let $V_{+}$ denote one of them, and $V_{-}$ the union of the others. Both are open and noncompact. Now, let $\rho:M\to[1,\infty)$ be a proper smooth exhaustion and $\sigma:M\to[-1,1]$ smooth with $\sigma\equiv1$ on $V_{+}$ and $\sigma\equiv-1$ on $V_{-}$ off a compact collar of $\partial K$. Then $u:=\sigma\rho$ tends to $\pm\infty$ along $V_{\pm}$ and is proper since $\{|\sigma|<1\}$ has compact closure and $|u|=\rho$ outside of it. As Morse functions are dense in the strong topology, choose a Morse function $\mu$ with $\sup_{M}|\mu-u|<1$. The bound preserves properness and unboundedness. (Again Morseness is irrelevant to the statement itself.)

Now let $h$ be a regular value. The manifold $\mu^{-1}([h,\infty))$ is noncompact and has compact boundary $\mu^{-1}(h)$, hence finitely many components. Take $\cML$ to be the union of the noncompact ones. Then $\cML\neq \emptyset$, all its components are noncompact, $\HH:=\partial\cML$ is a non-empty union of components of $\mu^{-1}(h)$ and hence compact, and $\cMR:=\overline{M\setminus\cML}$ contains $\mu^{-1}((-\infty,h])$ and is therefore noncompact. Remark \ref{rem:euler} and Proposition \ref{prop:euler} finish the proof.
\end{proof}

Cases (A) and (C) are the two configurations that a proper Morse function produces, and the number of ends distinguishes between them: a proper $\mu$ bounded below has regular levels bounding compact sublevel sets, while a proper $\mu$ unbounded in both directions has compact regular levels with two noncompact sides. No proper function can produce a noncompact $\HH$ at all, since $\mu^{-1}(h)$ would be the preimage of a compact set. This is the subject of the next case, in which properness must be abandoned. As we have mentioned in the introduction, this case is special in that the realization technique is necessarily ad hoc.

\begin{thmstar}[Case D]
Let $(M,g)$ be a connected open Riemannian manifold, $n\geq2$. Then $M$ admits an admissible collar-gluing in which $\cMR$, $\HH$ and $\cML$ are all noncompact. In fact, one can be found such that
\[
  \HH\cong\mathbb{R}^{n-1},\qquad
  \cMR\cong\mathbb{R}^{n-1}\times[0,\infty),\qquad
  \Rreg\cong\mathbb{R}^{n}.
\]
The choices of $M_R$ and $M_L$ can be interchanged.
\end{thmstar}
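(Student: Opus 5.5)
The construction is local near infinity, and admissibility is then automatic from Proposition~\ref{prop:euler}. If $\cML$ has no compact component, its relative Euler class vanishes by Remark~\ref{rem:euler}. So the task is to manufacture a properly embedded hypersurface $\HH\cong\mathbb{R}^{n-1}$ that separates $M$ into two pieces. One piece should be a closed half-space $\cMR\cong\mathbb{R}^{n-1}\times[0,\infty)$, and the other should be connected and noncompact.

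\textbf{Step 1: the ray.} Choose a proper smooth embedding $\gamma:[0,\infty)\to M$. One exists because $M$ is open and connected: take a proper Morse function $\mu$ from Lemma~\ref{lem:propermorse}, pick points $p_k$ with $\mu(p_k)\to\infty$, and join consecutive ones by arcs. By Lemma~\ref{lem:core} applied to a compact exhaustion, each arc can be chosen to avoid a large compact set, so the concatenation escapes every compact set. For $n\ge3$ it can then be perturbed to an embedding by general position. For $n=2$, one builds it embedded directly by working inside the noncompact complementary components.

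\textbf{Step 2: the half-space.} Take a closed tubular neighbourhood $T$ of $\gamma([0,\infty))$. We can take it of the form $T\cong D^{n-1}\times[0,\infty)$ union a half-ball cap at $\gamma(0)$, so that $T$ is a closed smooth codimension-zero submanifold diffeomorphic to $\mathbb{R}^{n-1}\times[0,\infty)$ after smoothing. This uses that the normal bundle of a ray is trivial and that the widths can shrink so the tube is properly embedded. Its boundary $\partial T$ is $S^{n-2}\times[0,\infty)$ capped by a disc, hence $\partial T\cong\mathbb{R}^{n-1}$, properly embedded and closed. Put $\cMR:=T$, $\HH:=\partial T$ and $\cML:=\overline{M\setminus T}$. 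Then $\Rreg=\intr T\cong\mathbb{R}^n$, and $\HH$ is noncompact. $\cML$ is noncompact, since otherwise $M=T\cup\cML$ would be contained in a closed set that is manifestly not all of a connected open manifold. More simply, $\cML$ contains points arbitrarily far out away from the thin tube. The part I expect to need care is that $\cML$ is connected, or at least that each of its components is noncompact. Removing a thin tube around a properly embedded ray does not disconnect a connected manifold of dimension $n\ge2$. Indeed, $M\setminus\intr T$ deformation retracts from $M$ minus the ray, and a ray has codimension $\ge1$ with locally connected complement on both sides of its tube. For $n=2$ this is the delicate case, since one must check that the two sides of the strip are joined around the cap at $\gamma(0)$, which holds because the cap is a half-disc. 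In any case the unique component of $\cML$ is noncompact: it contains $\partial T\cong\mathbb{R}^{n-1}$, which is noncompact.

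\textbf{Step 3: admissibility and interchange.} Because $\cML$ is noncompact and connected, $H^n(\cML,\HH;\mathbb{Z}_w)=0$ by Remark~\ref{rem:euler}. Any transverse boundary field therefore extends without zeros, and Proposition~\ref{prop:euler} (via Lemma~\ref{lem:defining} and Lemma~\ref{lem:master}) gives admissibility. For the interchange, set $\cML:=T$ instead. Now $T$ is noncompact and connected, so its relative Euler class again vanishes. Concretely, the field $\partial_t$ along the $[0,\infty)$-direction, bent to be the normal field on the cap, is already a nowhere-vanishing transverse field. The main obstacle I expect is Step 1/2 for $n=2$, namely producing a properly embedded ray and verifying connectedness of the complement without general position. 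There I would instead choose $\gamma$ as a flow line of $\nabla\mu$ for the proper Morse function $\mu$ of Lemma~\ref{lem:propermorse}, starting above the last critical value along a noncompact component, which is automatically embedded and proper.
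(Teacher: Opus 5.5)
Your Steps 2 and 3 are the paper's argument: thicken a properly embedded ray to a closed tube $T\cong\mathbb{R}^{n-1}\times[0,\infty)$, set $\HH=\partial T\cong\mathbb{R}^{n-1}$, and conclude from the vanishing of the relative Euler class on noncompact components, with $T$ on either side. The gap is Step 1, the existence of the ray, and specifically the case $n=2$. The statement includes this case, and general position is unavailable there.

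Your fallback for $n=2$ is a gradient line of $\mu$ started above the last critical value. This presupposes that $\mu$ has finitely many critical points, which is false in general. By the remark after Corollary~\ref{cor:selection}, a proper Morse function with finitely many critical points exists only if $M$ is the interior of a compact manifold with boundary. So on any surface of infinite type (infinite genus, or infinitely many ends), every proper Morse function has unbounded critical values. Without a last critical value, nothing forces an upward gradient line to escape to infinity. For example, take a radial function on $\mathbb{R}^2$ with ridges at radii $r_k\to\infty$, perturbed to be Morse: it sends every point up to some critical point. Your other sentence for $n=2$ (``working inside the noncompact complementary components'') is not an argument.

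There is also a smaller omission valid for all $n$. Points $p_k$ chosen only subject to $\mu(p_k)\to\infty$ may alternate between ends (take $M=S^{n-1}\times\mathbb{R}$). Then every arc from $p_k$ to $p_{k+1}$ meets a fixed compact set, so the concatenation is not proper. You must choose the $p_k$ along a nested sequence $U_1\supset U_2\supset\cdots$ of noncompact components of $M\setminus K_i$.

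The paper's Lemma~\ref{lem:ray} settles the ray uniformly in all dimensions. The $1$-skeleton of a smooth triangulation is a connected, locally finite, infinite graph, so K\"onig's lemma gives an infinite simple edge path. This path is injective, and proper because a compact set meets only finitely many simplices; the corners are then smoothed. You could equally repair your own route: take your proper path to be a generic immersion, whose image is then a locally finite graph, and apply K\"onig there.

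Finally, your connectedness discussion for $\cML$ (deformation retraction from $M$ minus the ray, local analysis at the cap) can be replaced by the paper's one-line argument. A component of $\cML$ with empty boundary would be open and closed in $M$, and $\HH\cong\mathbb{R}^{n-1}$ is connected. So $\cML$ is connected and contains the noncompact closed subset $\HH$.
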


\begin{lem}\label{lem:ray}
Every connected open manifold $M$ admits a properly embedded ray $\gamma:[0,\infty)\hookrightarrow M$.
\end{lem}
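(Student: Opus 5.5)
The plan is to build the ray as a path to infinity through a compact exhaustion, and then make it smooth, embedded and proper by general position. Using Lemma~\ref{lem:propermorse}, I fix a proper function $\mu:M\to[0,\infty)$. Since $M$ is noncompact, $\mu$ is unbounded, so I can pick a sequence of points $p_{k}\in M$ with $\mu(p_{k})\geq k$; in particular the $p_{k}$ leave every compact set.

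\textbf{Step 1: a proper continuous ray.} Because $M$ is connected, I join $p_{k}$ to $p_{k+1}$ by a path $\gamma_{k}$. The point is to choose $\gamma_{k}$ so that it stays far out, i.e.\ inside $\{\mu\geq c_{k}\}$ for some $c_{k}\to\infty$. The naive choice fails: a superlevel set $\{\mu\geq c\}$ may be disconnected, and the component containing $p_{k}$ may be compact. To avoid this, I use the exhaustion $K_{1}\subset\intr(K_{2})\subset\cdots$ from Lemma~\ref{lem:core}, in which every component of $M\setminus\intr(K_{i})$ is noncompact, and set $K_{0}:=\emptyset$. Each $M\setminus\intr(K_{i})$ has finitely many components, and each of these meets $M\setminus K_{j}$ for every $j$. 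A K\"onig's-lemma argument then produces a nested sequence of noncompact components $E_{1}\supset E_{2}\supset\cdots$ with $E_{i}$ a component of $M\setminus\intr(K_{i})$. This is the same choice of an end used in Case~(C). Now I take $p_{i}\in E_{i+1}$ and join $p_{i}$ to $p_{i+1}$ by a path inside the connected set $E_{i+1}$. The concatenation $\gamma$ maps $[i,i+1]$ into $M\setminus\intr(K_{i+1})$, so $\gamma$ is proper.

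\textbf{Step 2: smoothing.} Each $E_{i}$ is a connected manifold with boundary, so it is path connected through its interior. I can therefore take each connecting path to be a smooth immersed arc in $\intr(E_{i+1})$, and I smooth the corners at the junctions inside small charts. The smoothing moves points only within $\intr(E_{i+1})$, so properness is preserved.

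\textbf{Step 3: embeddedness.} This is the main obstacle. For $n\geq3$, a generic perturbation of a smooth curve is an embedding, by transversality: double points of a $1$-manifold mapped into an $n$-manifold are avoided when $2<n$. I will perturb segment by segment, using a perturbation that is small on each $[i,i+1]$ and supported in $\intr(E_{i+1})$. This keeps $\gamma$ proper and injective. Injectivity across different segments is handled inductively, by perturbing each new segment to avoid the compact image of the earlier ones.

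For $n=2$, generic perturbation does not remove crossings, so a different argument is needed. First, whenever $\gamma(s)=\gamma(t)$ with $s<t$, I delete the loop $\gamma|_{(s,t)}$. On each compact piece a generic immersed curve has only finitely many double points, so this is a finite operation on each piece. Second, I arrange the loop-cutting so that the result is still proper: I process the segments in order, always keeping the portion that ends at the latest point, and I note that the final curve restricted to the part after the last visit to $K_{i}$ lies in $M\setminus\intr(K_{i})$. A cleaner alternative is to take, inside each connected surface $\intr(E_{i+1})$, an embedded arc from $p_{i}$ to $p_{i+1}$ that meets the previously constructed compact arc only at $p_{i}$. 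This is possible because removing a compact embedded arc from a connected surface leaves its complement connected, provided the endpoint is taken to lie on a boundary edge of a small disc chart. This yields an injective proper immersion, and hence, being proper, a properly embedded ray.
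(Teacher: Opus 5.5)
Your proposal is correct in outline but takes a different route from the paper, and its ``cleaner alternative'' for $n=2$ does not work as justified.

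\textbf{Comparison of routes.} The paper takes a smooth triangulation of $M$ and applies K\"onig's lemma to the connected, locally finite, infinite $1$-skeleton to obtain an infinite simple edge path. Such a path is automatically injective, since it uses distinct vertices and distinct open edges. It is also proper, since a compact set meets only finitely many simplices. The only remaining work is rounding the corners inside the open stars of the vertices.

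You instead build a proper path through a nested choice $E_1\supset E_2\supset\cdots$ of noncompact components of the complements of an exhaustion. Injectivity then comes analytically: by transversality for $n\geq3$, and by loop erasure for $n=2$.

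The paper's argument does not depend on the dimension and gets embeddedness for free from combinatorics, at the cost of invoking smooth triangulations. Yours uses only proper functions and general position. It also lets you aim the ray at any prescribed end of $M$, by choosing the $E_i$ accordingly. The price is the split into $n\geq3$ and $n=2$.

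Incidentally, Step 1 needs no K\"onig-type argument. Every component of $M\setminus\intr(K_i)$ is noncompact, so it meets $M\setminus K_{i+1}$ and therefore contains a component of $M\setminus\intr(K_{i+1})$. The $E_i$ can thus be chosen by simple induction.

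\textbf{Two caveats for $n=2$.}

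First, cutting out the loop $\gamma|_{(s,t)}$ leaves a corner at the transverse double point $\gamma(s)=\gamma(t)$. So the loop-erased curve is not yet an immersion, and your closing sentence needs an extra step. You must round these corners in small discs around the double points. This is harmless: after erasure, such a disc contains only one incoming and one outgoing half-branch, so no new crossings appear.

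Second, the ``cleaner alternative'' fails. The earlier arc $\gamma([0,i])$ is not contained in $\intr(E_{i+1})$. Its trace in $\intr(E_{i+1})$ is a union of arcs that may run from $\partial E_{i+1}$ back to $\partial E_{i+1}$. It is therefore not a compact arc in that surface, and it can separate it.

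For instance, take $M=\mathbb{R}^2$ with $K_j$ the closed disc of radius $j$. The segment from $p_{i-1}$ to $p_i$ lies in $\{|x|>i\}$, so it may do the following:
\begin{itemize}
\item enter $\{|x|\geq i+1\}$ and run around at a larger radius;
\item exit again through the circle $|x|=i+1$;
\item pass back into the bounded pocket it has cut off and end at $p_i$ there.
\end{itemize}
Then no arc in $\intr(E_{i+1})$ starting at $p_i$ and avoiding the earlier arc can reach far out. The fact that a compact arc has connected complement in a connected surface does not apply here. Rely on the loop-erasure argument, with the corner rounding, rather than on this alternative.
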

\begin{proof}
Give $M$ a smooth triangulation~\cite{Whitehead}. Since $M$ is connected and noncompact, its $1$-skeleton is a connected, locally finite, infinite graph, so it contains an infinite simple edge path $v_{0},v_{1},\dots$ (this is due to K\"onig \cite{Konig}). Let $\gamma_{0}:[0,\infty)\to M$ traverse it. It is injective, and proper: a compact $C\subset M$ meets only finitely many simplices, so $\gamma_{0}^{-1}(C)$ is a closed subset of a finite union of edges. A proper injective continuous map into a Hausdorff space is a closed embedding. Finally, smooth the corners at the $v_{i}$ inside their open stars, which changes $\gamma_{0}$ inside a locally finite family of compact sets and so preserves properness and injectivity.
\end{proof}

\begin{proof}[Proof of Case (D)]
Let $\gamma$ be a properly embedded ray as in Lemma~\ref{lem:ray} and $R=\gamma([0,\infty))$, a closed properly embedded submanifold with boundary. Since $[0,\infty)$ is contractible, the normal bundle of $R$ is trivial, so $R$ has a closed tubular neighbourhood $T\cong[0,\infty)\times D^{n-1}$, closed as a subset of $M$. Rounding the corner along $\{0\}\times S^{n-2}$ (or capping it off with a half-ball) makes $T$ a smooth codimension-zero submanifold with smooth boundary
\[
  \HH:=\partial T\cong D^{n-1}\cup_{S^{n-2}}\bigl([0,\infty)\times S^{n-2}\bigr)
  \cong\mathbb{R}^{n-1},
\]
and $T$ itself, a half-infinite thickened ray with rounded corner, is diffeomorphic to the closed half-space $\mathbb{R}^{n-1}\times[0,\infty)$. See Figure \ref{fig:tube}.

Set $\cMR:=T$ and $\cML:=\overline{M\setminus T}$. Since $\partial T\neq\emptyset$ and $M$ is connected, $M\setminus T\neq\emptyset$, so $M\setminus\HH=\intr(T)\sqcup(M\setminus T)$. Thus have a collar-gluing. A component of $\cML$ with empty boundary would be open and closed in $M$, so $\cML$ is connected with $\partial\cML=\HH$. As $\HH\cong\mathbb{R}^{n-1}$ is noncompact and closed in $\cML$, the side $\cML$ is noncompact, and so is $\cMR=T$. Now apply Proposition \ref{prop:euler}. The same argument works with $\cML:=T$ and $\cMR:=\overline{M\setminus T}$.
\end{proof}

\begin{figure}[htbp]
  \centering
  \begin{tikzpicture}[line join=round,line cap=round,scale=0.72]
    \def\tubepath{(2.3,4.3) .. controls (4.0,4.7) and (4.6,1.9) .. (6.4,2.2)
                  .. controls (8.2,2.5) and (8.4,4.9) .. (10.2,4.6)
                  .. controls (12.0,4.3) and (12.4,3.5) .. (14.2,3.3)}
    \begin{scope}
      \clip (0,0.3) rectangle (13,6.15);
      \fill[lor] (0,0.3) rectangle (13,6.15);
      \draw[line width=0.91cm, black] \tubepath;
      \draw[line width=0.82cm, riem] \tubepath;
      \draw[densely dotted, black!65, line width=0.8pt] \tubepath;
    \end{scope}
    \fill (2.3,4.3) circle (2.2pt);
    \draw[black!45, dashed, line width=0.8pt] (0,0.3) rectangle (13,6.15);
    \draw[black!45,-{Stealth[length=5pt]},line width=0.8pt] (13.15,3.2) -- (13.85,3.2);
    \node at (0.85,5.6) {$M$};
    \node at (7.6,5.5) {$\Lreg$};
    \node at (4.2,1.1) {$\cMR$};
    \draw[black!70,-{Stealth[length=4pt]},line width=0.6pt] (4.35,1.5) -- (5.1,2.85);
    \node at (9.9,1.6) {$\HH$};
    \draw[black!70,-{Stealth[length=4pt]},line width=0.6pt] (9.9,1.95) -- (9.35,3.75);
    \node at (1.3,3.35) {$\gamma$};
  \end{tikzpicture}
  \caption{Case (D) inside a piece of an open manifold $M$. No part of the configuration is compact, and the two sides may be interchanged.}
  \label{fig:tube}
\end{figure}

Near $\HH$ the generic function $f$ of Lemma \ref{lem:defining} (which went into our construction through Proposition \ref{prop:euler}) is a coordinate projection, which is Morse but not proper, and away from $\HH$ it is constant. Consequently no global Morse-theoretic input is used in case (D). Replacing the ray by other properly embedded objects gives further hypersurfaces: if $C\subset M$ is a properly embedded closed noncompact submanifold of dimension $k\leq n-1$ with trivial normal bundle, then $\partial T(C)$ is again a noncompact separating hypersurface, and for $C\cong\mathbb{R}^{k}$ one gets $\HH\cong\mathbb{R}^{k}\times S^{n-k-1}$. Taking $C$ a properly embedded noncompact graph when $n=3$, one realizes transition hypersurfaces of arbitrary genus and number of ends (as permitted by $M$).

\begin{corstar}[Case E]
\label{cor:E}
\label{Corollary:Geometric Continuation Case 4}
Let \((M,g)\) be a connected open Riemannian manifold with \(\dim M=n\geq2\). A collar-gluing presentation $M=\cMR\cup_{\HH}\cML$ with $\cML$ compact is admissible if and only if the criterion of Case (B) (Section \ref{subsec:cMR compact}) holds for the components of $\cML$.
\end{corstar}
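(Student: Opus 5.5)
The plan is to reduce Case (E) directly to Proposition~\ref{prop:euler}, since admissibility there is characterised purely in terms of $\cML$ and does not depend on $\cMR$ at all. Because $M$ is an open manifold and $\cML$ is compact, the complementary side $\cMR$ is automatically noncompact: otherwise $M=\cMR\cup\cML$ would be compact. No realizability question arises for $\cMR$, and the statement is really a criterion for a given presentation.

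The first step is to invoke the equivalence $(1)\Leftrightarrow(4)$ of Proposition~\ref{prop:euler}. It says that the collar-gluing is admissible if and only if every compact component $X$ of $\cML$ admits a splitting $\partial X=\partial_+X\sqcup\partial_-X$ into unions of boundary components with $\chi(X)=\chi(\partial_-X)$. Since $\cML$ is compact, it has finitely many components, and all of them are compact. Each component $X$ has non-empty boundary, because a component with empty boundary would be open and closed in the connected manifold $M$, hence equal to it, which contradicts noncompactness of $M$. The condition of (4) is therefore exactly the componentwise criterion of Case (B), which Corollary Case (B) derives from the same proposition. The even-dimensional simplification $\chi(X)=0$ carries over verbatim from the final sentence of Proposition~\ref{prop:euler}.

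The only point needing care is that the proof of $(2)\Rightarrow(1)$ in Proposition~\ref{prop:euler} does not use compactness of $M$. That proof extends the boundary-transverse field off $\cML$ via Seeley extension, damps it by a cutoff $\psi$, and takes $f$ from Lemma~\ref{lem:defining}; all of these constructions are local near $\cML$. Here $\cML$ is compact, so the cutoff neighbourhood $W'$ can even be taken relatively compact, and Lemma~\ref{lem:master} then applies unchanged on the noncompact $M$. I expect no genuine obstacle. The main thing to state explicitly is that the relative Euler class of Remark~\ref{rem:euler} is computed intrinsically on the compact manifold with boundary $\cML$, so noncompactness of $\cMR$ neither helps nor hinders.
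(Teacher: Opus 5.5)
Your proposal is correct and follows essentially the same route as the paper: the criterion comes directly from Proposition~\ref{prop:euler}, whose proof never uses compactness of $M$, and the only additional content is that $\cMR=M\setminus\Lreg$ is noncompact because $M$ is open while $\cML$ is compact (the paper also notes that $\HH$, being closed in $\cML$, is compact). Reading the Case (B) criterion componentwise, as you do, is the right interpretation, since that is exactly condition (4) of the proposition.
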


\begin{proof}
The first statement follows from Proposition~\ref{prop:euler}, as in
Case~(B). It remains only to note the compactness properties of the two
sides. Since \(\overline{M_L}\) is compact and
\(\mathcal H\subset\overline{M_L}\) is closed in  \(\overline{M_L}\),
\(\mathcal H\) is compact. Moreover, since \(M\) is noncompact while
\(\overline{M_L}\) is compact, $\overline{M_R}=M\setminus M_L$
is noncompact.
\end{proof}


This completes the proof of the main theorem. That the list is exhaustive is immediate, as noted in Remark \ref{rem:selection rules exhaustive}.

\section{Remarks and examples}\label{sec:remarks}

\subsection{The sign of \texorpdfstring{$V$}{V}}

Let \(X\) be a connected component of \(M_L\). For an admissible continuation, \(V|_X\) is timelike and nowhere vanishing, and therefore determines a time orientation on \(X\). We take \(V\) to be future-pointing; thus a causal vector \(Y\) is future-pointing when $\widetilde g(Y,V)<0$. Replacing \(V\) by \(-V\) leaves \(\widetilde g\) unchanged, since \(V\) enters~\eqref{eq:ansatz} quadratically, but reverses the chosen time orientation.

Let \(\mathcal H_j\subset\partial\overline X\) be a connected boundary component. If \(V\) points from \(\mathcal H_j\) into \(X\), then \(\mathcal H_j\) is an initial hypersurface, or big bang; if \(V\) points from \(X\) across \(\mathcal H_j\) into \(M_R\), it is a final hypersurface, or big crunch. These labels are assigned relative to each connected component \(X\) of \(M_L\).

If \(X\) has several boundary components, there is no requirement that exactly one be initial and one final: several components may be initial, or several may be final, provided the relative Euler-class condition is satisfied. This does not require \(V\) to vanish or the time orientation to reverse; it only means that the direction of \(V\) relative to the different boundary components is different. Distinct connected components of \(M_L\) are treated independently.
Thus Proposition~\ref{prop:euler}(4) requires one to assign on a compact Lorentzian region bangs and crunches on its boundary $\HH$ in such a way that the Euler condition is met (see Example \ref{ex:caps} and Figure \ref{fig:caps}). In even dimensions, and in particular for $n=4$, no assignment is admissible if $\chi(\cML)\neq0$.

\subsection{A Morse-theoretic causal selection rule}

The mere existence (\cite[Proposition 37]{ONeill}) of a Lorentzian metric is insufficient for a physically satisfactory spacetime. It neither guarantees a global time function nor excludes branching of the global time evolution. This distinction becomes apparent in examples involving topology change, such as the trouser topology, where a single connected ``tube'' splits into two; Figure \ref{fig:trouser}. In this simplest example, the number of connected components of the
spatial sections changes from one to two, so that their \(\pi_0\)
changes as one passes from the Riemannian region through the
topology-changing regime into the Lorentzian region.

\begin{figure}[htbp]
\centering
\begin{tikzpicture}[line join=round,line cap=round,scale=0.5]
  \def\bodypath{(1.00,10.35)
    .. controls (1.03,8.80) and (1.08,7.10) .. (1.15,5.58)
    .. controls (1.25,3.10) and (2.55,0.92) .. (4.80,0.58)
    .. controls (7.05,0.92) and (8.34,3.10) .. (8.45,5.58)
    .. controls (8.52,7.10) and (8.57,8.80) .. (8.60,10.35) -- cycle}

  \def\holepath{(3.72,10.38)
    .. controls (3.74,8.82) and (3.78,7.05) .. (3.88,5.58)
    .. controls (3.98,4.30) and (4.20,3.62) .. (4.80,3.38)
    .. controls (5.40,3.62) and (5.62,4.30) .. (5.72,5.58)
    .. controls (5.82,7.05) and (5.86,8.82) .. (5.88,10.38) -- cycle}

  \begin{scope}
    \clip (0.50,0.25) rectangle (9.10,10.02);
    \begin{scope}[even odd rule]
      \clip \bodypath \holepath;
      \fill[riem] (0.50,0.25) rectangle (9.10,10.02);
      \fill[lor]  (0.50,5.58) rectangle (9.10,10.02);
    \end{scope}

    \draw[line width=0.95pt] (1.00,10.35)
      .. controls (1.03,8.80) and (1.08,7.10) .. (1.15,5.58)
      .. controls (1.25,3.10) and (2.55,0.92) .. (4.80,0.58)
      .. controls (7.05,0.92) and (8.34,3.10) .. (8.45,5.58)
      .. controls (8.52,7.10) and (8.57,8.80) .. (8.60,10.35);

    \draw[line width=0.95pt] (3.72,10.38)
      .. controls (3.74,8.82) and (3.78,7.05) .. (3.88,5.58)
      .. controls (3.98,4.30) and (4.20,3.62) .. (4.80,3.38)
      .. controls (5.40,3.62) and (5.62,4.30) .. (5.72,5.58)
      .. controls (5.82,7.05) and (5.86,8.82) .. (5.88,10.38);

    \draw[black!35,line width=0.70pt]
      (1.15,5.58) .. controls (1.88,5.88) and (3.16,5.88) .. (3.88,5.58);
    \draw[line width=1.05pt]
      (1.15,5.58) .. controls (1.88,5.31) and (3.16,5.31) .. (3.88,5.58);

    \draw[black!35,line width=0.70pt]
      (5.72,5.58) .. controls (6.44,5.88) and (7.72,5.88) .. (8.45,5.58);
    \draw[line width=1.05pt]
      (5.72,5.58) .. controls (6.44,5.31) and (7.72,5.31) .. (8.45,5.58);
  \end{scope}

  \node at (2.35,7.85) {$M_{L_1}$};
  \node at (7.15,7.85) {$M_{L_2}$};
  \node at (4.80,1.52) {$M_R$};
  \node at (2.25,4.90) {$\mathcal H_1$};
  \node at (7.05,4.90) {$\mathcal H_2$};
\end{tikzpicture}
\caption{The noncompact trouser manifold. The bifurcation point corresponds to a critical point of the (height) Morse function. Choosing the signature-changing hypersurface \(\mathcal H\) above the critical level places the topology-changing event on the Riemannian side, while the Lorentzian region splits into two disconnected cylindrical components.}
\label{fig:trouser}
\end{figure}

Classical results of Geroch~\cite{Geroch} and subsequent authors~\cite{Borde,Dowker+Surya,Dowker+Garcia+Surya,Garcia-Heveling} show that topology change is severely restricted in Lorentzian spacetimes satisfying suitable causal assumptions, although the precise conclusions depend on the compactness assumptions and on the global structure of the spacelike hypersurfaces involved. Thus, while manifolds exhibiting topology change may admit Lorentzian metrics, such metrics need not possess desirable global causal properties.

The following classical result due to Geroch \cite{Geroch} illustrates this:

\begin{quote}
\emph{The chronology condition is violated on nonempty closed Lorentzian manifolds.}\footnote{Recall that a time-oriented Lorentzian manifold is said to satisfy the \emph{chronology condition} if it admits no closed timelike curves \cite[p.\ 407]{ONeill}.}
\end{quote}

This suggests the need for a Morse-theoretic selection rule, or ``choice of hypersurface'' admissibility condition. 

It is a standard fact that two level sets of a Morse function between which there are no critical points are diffeomorphic. We can strengthen that statement slightly by assuming properness:

\begin{cor}[Morse-causal admissibility of the Lorentzian region]
\label{cor:selection}
	Suppose $M$ is the interior of a compact smooth manifold $W$ with boundary. Then $M$ admits an admissible collar-gluing $M=\cMR\cup_{\HH}\cML$ with $\cMR$ compact and with a diffeomorphism $\cML\cong\HH\times[0,\infty)$ of smooth manifolds with boundary.
\end{cor}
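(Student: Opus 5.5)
We will build the collar-gluing from a collar of $\partial W$ rather than from a Morse function with possibly infinitely many critical points. By the collar neighbourhood theorem, $\partial W$ has a collar $\kappa:\partial W\times[0,1)\hookrightarrow W$ with $\kappa(x,0)=x$. Set $\HH:=\kappa(\partial W\times\{\tfrac12\})$, $\cML:=\kappa(\partial W\times(0,\tfrac12])$ and $\cMR:=W\setminus\kappa(\partial W\times[0,\tfrac12))$. Since $\partial W\neq\emptyset$ (as $M$ is open; if $W$ were closed the statement would be vacuous or must be excluded), $\HH$ is a nonempty closed embedded hypersurface in $M$, compact because $\partial W$ is. The reparametrization $s\mapsto\tfrac{1}{2}e^{-s}$, or any diffeomorphism $(0,\tfrac12]\cong[0,\infty)$ reversing orientation, then gives $\cML\cong\HH\times[0,\infty)$ as manifolds with boundary. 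Also, $\cMR$ is a closed subset of the compact $W$ lying in $M$, hence compact, and it is the closure of $\Rreg$ and meets $\cML$ exactly in $\HH$. Thus this is a collar-gluing in the sense of Definition~\ref{def:collargluing}.

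A point to check is that $M\setminus\HH$ splits as required, i.e.\ that $\Rreg$ and $\Lreg$ are nonempty and disjoint open sets. This holds because $\HH$ is a union of parallel copies of boundary components. Note that $\Rreg$ may be disconnected if $\partial W$ is disconnected, but the definition allows this. Connectedness of $M$ is not used beyond the standing hypotheses.

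For admissibility we invoke Proposition~\ref{prop:euler}(2): on $\cML\cong\HH\times[0,\infty)$ the coordinate field $\partial_t$ is nowhere vanishing and transverse to $\HH=\HH\times\{0\}$, pointing into $\Lreg$ (a uniform big bang). Alternatively, every component of $\cML$ is noncompact, so Remark~\ref{rem:euler} applies directly. Then Lemma~\ref{lem:defining} together with Lemma~\ref{lem:master} give the metric. Concretely, one can take $f=1+t$ near and on $\cML$ and $V$ the normalized $\partial_t$, damped into $\Rreg$ as in the proof of Proposition~\ref{prop:euler}.

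The main obstacle is the interpretive one of connecting this with ``Morse-causal'' content. We will remark that a proper function $\mu$ equal to $t$ on $\cML$ (extended over the compact $\cMR$ and perturbed to be Morse there, keeping it fixed on the collar where it already has no critical points) has all its critical points in $\cMR$. Hence all topology change is confined to the Riemannian region, and $t$ is a global time coordinate on $\Lreg$ with diffeomorphic level sets. The only technical care needed is in this relative Morse perturbation, which is standard since $t$ is already regular on a neighbourhood of $\cML$.
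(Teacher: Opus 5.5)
Your proof is correct, and it takes a genuinely different route from the paper's.

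\textbf{The paper's route.} It starts from a Morse function $f$ on $W$ with $f^{-1}(0)=\partial W$ and no critical points near $\partial W$ (Milnor). It sets $\mu=1/f|_{M}$, a proper Morse function with finitely many critical points, and picks a regular value $h$ above all of them. It then integrates $Y=\nabla\mu/|\nabla\mu|^{2}$ on $\mu^{-1}[h,\infty)$; properness gives forward completeness, and hence $\mu^{-1}[h,\infty)\cong\HH\times[0,\infty)$. Admissibility is then quoted from Case (A).

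\textbf{Your route.} You bypass Morse theory. You take $\HH$ to be a parallel copy $\kappa(\partial W\times\{\tfrac12\})$ of the boundary inside a collar and reparametrize $(0,\tfrac12]\cong[0,\infty)$. You then apply Proposition~\ref{prop:euler}(2) with $\partial_t$, or equivalently Remark~\ref{rem:euler}, since every component of $\cML$ is noncompact. Your verifications check out: $\cMR$ is closed in $W$ and avoids $\partial W$, the closures are correct, and $\cMR\cap\cML=\HH$. Your concrete choice $f=1+t$ near $\cML$ with $V$ the normalized $\partial_t$ does satisfy (i)--(iii) of Lemma~\ref{lem:master}, once $f$ is interpolated below $1$ into $\Rreg$.

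\textbf{What each approach buys.} Yours is shorter and more elementary. It needs neither a Morse function with prescribed boundary behaviour nor the flow-completeness argument; the paper's gradient flow is in effect manufacturing a collar anyway. The paper's route gives, by construction, that $\HH$ is a regular level of a global proper Morse function whose critical points all lie in $\cMR$. That is the ``Morse-causal'' content the corollary is named for: topology change is confined to the Riemannian region. You obtain this only afterwards, by a relative Morse approximation held fixed on the collar. That step is standard, and it is not needed for the statement as written.

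\textbf{Two small remarks.} First, you are right that $\partial W\neq\emptyset$ must be assumed, i.e.\ $M$ open; the paper's remark after the corollary confirms this reading. Second, your aside that $\Rreg$ may be disconnected when $\partial W$ is disconnected is a slip. $\Rreg=W\setminus\kappa(\partial W\times[0,\tfrac12])$ is diffeomorphic to $M$ by shrinking the collar, so it is connected whenever $M$ is. What becomes disconnected in that case is $\HH$ and $\Lreg$. This does not affect the argument.
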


\begin{proof}
	We will first show that $M$ admits a proper Morse function $\mu$ with a regular value $h$ such that $\cML=\mu^{-1}[h,\infty)\cong\HH\times[0,\infty)$, where $\HH=\mu^{-1}h$.
	
	It is standard (see \cite[\S 2]{Milnor65}) that $W$ admits a Morse function $f$ (meaning it is Morse on $M=\intr W$) with $\partial W=f^{-1}0$ and without critical points in a neighbourhood near $\partial W$. Its domain being compact, $f$ is proper and has finitely many critical points. Taking $\mu=1/f|_{M}$ we obtain a proper Morse function on $M$.
	
	Now we proceed as in the proof of the fact that for $h$ a regular value of $\mu$ above all its critical values we have diffeomorphisms $\mu^{-1}h\cong\mu^{-1}h'$ for all $h'\geq h$ \cite[Theorem 3.1]{Milnor1963}. On $\mu^{-1}([h,\infty))$ the field $Y=\nabla\mu/|\nabla\mu|^{2}$ is defined and satisfies $d\mu(Y)=1$. Since $\mu$ is proper, $\mu^{-1}([h,b])$ is compact for each $b\geq h$, and so an integral curve of $Y$ starting on $\HH$ stays in $\mu^{-1}([h,b])$ up to time $b-h$. The flow $\varphi$ of $Y$ on $\mu^{-1}([h,\infty))$ is defined for all time $s\geq0$ (and backwards until the trajectory meets $\HH$). This is the vector field used in the proof of \cite[Theorem 3.1]{Milnor1963} along whose flow $\mu$ increases at unit speed. Hence $\HH\times[0,\infty)\to\mu^{-1}([h,\infty))$, $(x,s)\mapsto\varphi_{s}(x)$, is a diffeomorphism, with inverse $q\mapsto(\varphi_{h-\mu(q)}(q),\mu(q)-h)$.
	
	Now admissibility follows from Case (A) (Section \ref{subsec:cMR compact}).
\end{proof}

\begin{rem}
	The hypothesis of Corollary \ref{cor:selection}, that an open smooth manifold $M$ be the interior of a compact smooth manifold $W$ with boundary, is equivalent to it admitting a proper Morse function with finitely many critical points. We proved one implication in the proof itself; for the other implication, a flow construction like in that proof gives a diffeomorphism $M\cong \intr f^{-1}[a,b]$ for suitable $a,b$ beyond the critical values, and one can take $W=f^{-1}[a,b]$. The problem of finding a boundary is a classical problem in geometric topology (for $\dim\geq6$ it is classical: see Siebenmann \cite{Siebenmann}).
	
	Properness cannot be dropped for a conceptual reason: as a direct consequence of Phillips' submersion theorem, every open smooth manifold admits a submersion $M\to \mathbb R$ as it admits a nowhere-vanishing vector field \cite[Corollary 8.3]{Phillips}.
\end{rem}

This corollary should not be read as a direct application of (or answer to) the result of Geroch we recalled above, whose standard form applies to the closed case. It is only motivated by it, noting an intrinsic condition on $M$ (having a smooth boundary at infinity) which lets us maximally restrict the topology of the Lorentzian region given $\HH$.

\subsection{Examples}

\begin{example}[caps on the sphere]
\label{ex:caps}
Let $M=S^{n}$ and let $\HH=\HH_{1}\sqcup\HH_{2}$ be two disjoint round spheres (each an $S^{n-1}$) bounding the polar caps, and let $\cMR$ be the union of the two caps. Thus $\cML\cong S^{n-1}\times[0,1]$; see Figure \ref{fig:caps}. This is the closed-universe version of the no-boundary picture: a Lorentzian era of spherical spatial sections, capped at both ends by Euclidean regions instead of being terminated by singularities. Here $\chi(\cML)=\chi(S^{n-1})$, which is $0$ for $n$ even and $2$ for $n$ odd, so:
\begin{itemize}
  \item For $n$ even, and in particular for $n=4$, the configuration is admissible, and every distribution of big bangs and big crunches over $\HH_{1},\HH_{2}$ can be realised.
  \item For $n$ odd, a distribution is admissible if and only if exactly one of $\HH_{1},\HH_{2}$ is a big bang and the other a big crunch. The other distributions are obstructed by $\chi(\cML)-\chi(\partial_{-}\cML)=\pm2$.
\end{itemize}
The second statement is a selection rule in the strict sense: the topology of the Lorentzian region determines how many beginnings and endings it may have. It also illustrates that the criterion $\chi(\cML)=0$ is too strong in odd dimensions.

In particular, when they are admissible, the two-big-bang and two-big-crunch configurations do not represent a reversal of time orientation: \(V\) remains a smooth nowhere-vanishing timelike field throughout \(M_L\). What fails in such a configuration is not time-orientability, but the existence of a temporal function whose level sets run monotonically from one transition component to the other.
\end{example}

\begin{figure}[htbp]
  \centering
  \begin{tikzpicture}[line join=round,line cap=round,scale=0.78]
    \def\lo{2.0}
    \def\hi{4.0}
    \newcommand{\capsphere}[1]{%
      \begin{scope}[shift={(#1,3.0)}]
        \begin{scope}
          \clip (0,0) circle (2.3);
          \fill[riem] (-2.5,-2.5) rectangle (2.5,2.5);
          \fill[lor]  (-2.5,-1.0) rectangle (2.5,1.0);
        \end{scope}
        \draw[line width=0.9pt] (0,0) circle (2.3);
        \draw[line width=1.2pt] (0,-1.0) ellipse (2.07 and 0.34);
        \draw[line width=1.2pt] (0,1.0)  ellipse (2.07 and 0.34);
      \end{scope}}
    \capsphere{2.8}
    \foreach \x in {-0.95,0,0.95}
      {\draw[-{Stealth[length=5pt]},line width=1.0pt] (2.8+\x,\lo-0.45) -- (2.8+\x,\lo+0.45);
       \draw[-{Stealth[length=5pt]},line width=1.0pt] (2.8+\x,\hi-0.45) -- (2.8+\x,\hi+0.45);}
    \node at (2.8,3.0) {$\cML$};
    \node at (2.8,1.15) {$\cMR$};
    \node[anchor=east] at (0.2,\lo) {$\HH_{1}$};
    \draw[black!70,-{Stealth[length=4pt]},line width=0.6pt] (0.25,\lo) -- (0.85,\lo);
    \node[anchor=east] at (0.2,\hi) {$\HH_{2}$};
    \draw[black!70,-{Stealth[length=4pt]},line width=0.6pt] (0.25,\hi) -- (0.85,\hi);
    \node[align=center] at (2.8,0.0) {bang, then crunch\\[1pt] $e(T\cML,V)=0$};
    \capsphere{9.2}
    \foreach \x in {-0.95,0,0.95}
      {\draw[-{Stealth[length=5pt]},line width=1.0pt] (9.2+\x,\lo-0.45) -- (9.2+\x,\lo+0.45);
       \draw[-{Stealth[length=5pt]},line width=1.0pt] (9.2+\x,\hi+0.45) -- (9.2+\x,\hi-0.45);}
    \node at (9.2,3.0) {$\cML$};
    \node[align=center] at (9.2,0.0) {two bangs\\[1pt] $e(T\cML,V)=-\chi(S^{n-1})$};
  \end{tikzpicture}
  \caption{The two distributions of Example~\ref{ex:caps}, with $M=S^{n}$ cut
along two spheres and $\cML\cong S^{n-1}\times[0,1]$ the band between the caps. 
}
  \label{fig:caps}
\end{figure}

\begin{example}[Compact Lorentzian sides and pseudo-causal return]
Let \(M\) be a torus decomposed by two disjoint signature-changing
hypersurfaces $\mathcal H_1,\mathcal H_2\subset M$ into a Riemannian region \(M_R\) and a Lorentzian region \(M_L\), so that
both closed sides
\[
    \overline{M_R}=M_R\cup\mathcal H_1\cup\mathcal H_2,
    \qquad
    \overline{M_L}=M_L\cup\mathcal H_1\cup\mathcal H_2
\]
are compact. The torus example illustrates that the compact--compact case is not the same as a compact Lorentzian spacetime without boundary. If the whole torus carried a nondegenerate Lorentzian metric, then compactness would force the existence of a closed timelike curve. This classical conclusion does not apply directly here, because the Lorentzian metric is defined only on the open region \(M_L\) and becomes degenerate on the boundary hypersurfaces \(\mathcal H_1\) and \(\mathcal H_2\). Suppose, moreover, that the Lorentzian side admits a temporal function
\[
    \tau:\overline{M_L}\to[0,1]
\]
with
\[
    \tau^{-1}(0)=\mathcal H_1,
    \qquad
    \tau^{-1}(1)=\mathcal H_2,
\]
such that \(d\tau\) is timelike on \(M_L\). We choose the time orientation so that \(\tau\) increases toward the future. This is an additional causal assumption beyond time-orientability; in particular, it forces \(\mathcal H_1\) to be initial and \(\mathcal H_2\) to be final, and therefore excludes the two-big-bang and two-big-crunch assignments allowed in Example~\ref{ex:caps}. Then \(\tau\) is strictly
monotone along every future-directed timelike curve in \(M_L\). Hence
\(M_L\) contains no closed future-directed timelike curve. If \(\tau\) is a Cauchy temporal function, then every inextendible future-directed causal curve in \(M_L\) runs from the initial transition hypersurface \(\mathcal H_1\) toward the final transition hypersurface \(\mathcal H_2\).

Thus, in this signature-changing model, the compactness of \(\overline{M_L}\) does not create the closed-timelike-curve pathology. The degenerate hypersurfaces act as the initial and final boundaries of the Lorentzian era. If one extends the discussion to pseudo-timelike curves on the full signature-changing manifold then a curve may leave \(M_L\) through one signature-changing hypersurface, pass through the Riemannian region, and re-enter \(M_L\) through the other. Such a curve is not a closed timelike curve, but a pseudo-causal return, in the sense of the pseudo-timelike curves of~\cite{Hasse + Rieger-Loops}.


\end{example}

\section{Outlook}

The cases considered in this article give geometro-topological selection rules for admissible Riemannian-Lorentzian continuations of the form~\eqref{eq:ansatz}.  They do not determine which of the resulting signature-changing metrics are instantons. That requires further analytic and physical conditions, including the Einstein equations, matter-field equations, regularity conditions at the transition hypersurface, and suitable action and boundary conditions. Thus the selection rules obtained here should be regarded as necessary geometro-topological constraints on gravitational or cosmological instanton candidates, rather than as sufficient criteria for an instanton.

Our results concern an earlier stage of the problem: given a Riemannian region, when can it be joined to a Lorentzian region across a transverse signature-changing hypersurface? The resulting selection rules apply therefore to instanton candidates for which such a Lorentzian continuation is sought. This includes cosmological instantons, as well as gravitational instantons which are expected
to come from, or to continue to, Lorentzian solutions.

This distinction is important because gravitational instantons need not, in general, admit Lorentzian continuations (see \cite{Dunajski - Gravitational Instantons Old and New}): Some examples, such as Euclidean Schwarzschild and Euclidean Kerr, arise by analytic continuation of Lorentzian black-hole metrics. Other examples, such as Eguchi--Hanson, anti-self-dual Taub--NUT, and Chen--Teo-type instantons, do not arise in this way.

The compact-Riemannian/noncompact-Lorentzian configuration includes the standard no-boundary picture, in which a compact Riemannian cap is continued across a compact hypersurface to an expanding Lorentzian spacetime. It also includes Coleman--De Luccia type continuations; in that setting, the Euclidean saddle may be topologically similar to the no-boundary cap, but it carries different metric and matter data, and the Lorentzian continuation contains an expanding bubble separating regions with different effective vacuum energy. The selection rules developed here do not distinguish Hartle--Hawking and Coleman--De Luccia models, since both belong to the same compactness class. 

\section*{Acknowledgements}NER thanks Matthew Kleban for many insightful discussions that helped shape the direction of this article. This research was partially supported by the Austrian Science Fund (FWF) [Grant DOI 10.55776/EFP6]. \"{O}T was supported by the Austrian Science Fund (FWF) through FWF Project no.\ P 37046. We thank the Yale Mathematics Department and the Institute for Mathematics of the University of Zurich for their hospitality.

\setlength{\parskip}{0pt}
\end{document}